\theoremstyle{plain}
\newtheorem{theorem}{Theorem}[section]
\newtheorem{lemma}[theorem]{Lemma}
\newtheorem{proposition}[theorem]{Proposition}
\newtheorem{corollary}[theorem]{Corollary}
\theoremstyle{definition}
\newtheorem{notation}[theorem]{Notation}
\newtheorem{definition}[theorem]{Definition}
\theoremstyle{remark}
\newtheorem{remark}[theorem]{Remark}
\newcommand{\ldbr}{\{\!\!\{}
\newcommand{\rdbr}{\}\!\!\}}
\newcommand{\cref}[1]{Corollary \ref{#1}}
\begin{document}


\title[A Class of Non-Contracting Weakly Branch Groups]{A Branch Group in a Class of Non-Contracting Weakly Regular Branch Groups}
\author[Sagar Saha]{Sagar Saha}
\address{Department of Mathematics,  Indian Institute of Technology Guwahati, Guwahati, India}
\email{sagarsaha@iitg.ac.in}
\author[K. V. Krishna]{K. V. Krishna}
\address{Department of Mathematics, Indian Institute of Technology Guwahati, Guwahati, India}
\email{kvk@iitg.ac.in}

\begin{abstract}
We provide a class of non-contracting groups containing an infinite family of fractal and weakly regular branch groups, and study certain properties including abelianization, just infiniteness, and word problem. We present an example of a branch group in this class and show that it is of exponential growth. It seems this is the first example of a non-contracting branch group constructed explicitly.   
\end{abstract}

\subjclass[2010]{20E08}

\keywords{Groups acting on trees, Self-similar group, Branch group, Non-contracting group}

\maketitle

\vspace{.5cm}

\section{Introduction}

The exploration of groups acting on regular rooted trees gained significant importance since the pioneering work of Grigorchuk in 1980's. Notably, a simple counterexample to the General Burnside Problem was provided through the Grigorchuk group, which is an automorphism group of a 2-regular rooted tree \cite{grigorchuk1980burnside}. The Grigorchuk group also unveiled interesting properties, e.g., it is amenable but not elementary amenable, just infinite, and importantly, it is the first example of a group of intermediate word growth \cite{Grigorchuk1984}. Subsequently, many generalizations and different examples of groups acting on regular rooted trees were studied. For instance, in \cite{gupta1983burnside}, Gupta and Sidki introduced a family of $p$-groups for each odd prime $p$, and in \cite{bartholdi2001word}, Bartholdi and Sunic studied a family of groups generalizing the Grigorchuk group.

Following the emergence of similar examples that share common features, there was an attempt to categorize these instances into a cohesive framework which enables a more organized approach to their studies. This endeavor resulted in the classification of these instances into self-similar groups, and (weakly) branch groups. The groups falling in the intersection of these classes received special attention in the literature as they have many tools enabling to study properties like just infiniteness, fractalness, maximal subgroups, congruence subgroup problems, amenability, L-presentations (cf.   \cite{Bartholdi2003presentation,Bou-Rabee2020,Noce2020engel,Francoeur2020,Noce2021Hausdorff, Skipper2020}). The aforesaid examples are in the intersection of these classes. In addition, many other examples were extensively studied in the literature (e.g., \cite{Bartholdi2010,grigorchuk2002torsion}).  
Almost all the examples have a specific property, which defines contracting groups.

In contrast to contracting groups, the literature has very limited examples of non-contracting weakly branch groups. The groups constructed explicitly in \cite{Dahmani2005} by Dahmani and in \cite{mamaghani2011fractal} by Mamaghani were only two examples available until recently. While the Hanoi tower group $\Gamma_3$ is a contracting regular branch group, its generalization $\Gamma_d$ (defined based on the game on d pegs, for $d \geq 4$) is a family of non-contracting weakly branch groups (cf. \cite{Grigorchuk2006hanoi,Thesis_Skipper}). For any $d \ge 4$, it is not known whether or not $\Gamma_d$ is a branch group. In \cite{Noce2021}, Noce constructed an infinity family of non-contracting weakly branch groups acting on a $d$-regular rooted tree, for $d \geq 2$.  Due to their non-contracting nature, not much on any of the aforesaid groups is available in the literature. 

In this work, we provide yet another class of non-contracting weakly regular branch groups aiming to establish a branch group in the class. In Section \ref{gen_class}, we present a class of non-contracting self-similar groups acting on $d$-regular rooted trees, for $d \geq 3$, and study certain properties of their elements. In Section \ref{odd_class}, we consider the class of groups when $d$ is odd, and prove that each group in the class is fractal, weakly branch over its commutator subgroup. Further, we establish that the abelianization of each group in this class is isomorphic to $\mathds{Z}^d$, and hence the groups are not just infinite. We also provide an efficient algorithm for the word problem of this class. For $d = 3$, in Section \ref{grp_g3}, we show that the semigroup generated by $A$ (the defining generating set of the group) is free and accordingly observe that the group is of exponential growth. Moreover, we study the structure of its rigid stabilizer and show that the group is a branch group.

\section{Preliminaries}

We now recall the fundamental notions and fix the notations used in this work. We also present a few required properties from the literature. For more details, one may refer to \cite{Bartholdi2003,Grigorchuk2005,Nekrashevych2005}.  

For an integer $d \geq 2$, consider the set $X = \{1, \ldots, d\}$ of first $d$ positive integers. Let $X^*$ be the free monoid of the words over $X$ with respect to concatenation and the empty word be denoted by $\varepsilon$. The $d$-regular rooted tree over $X$, denoted by $T_X$, is a graph with the vertex set $X^*$, rooted at $\varepsilon$, and two vertices $u, v \in X^*$ are adjacent if and only if $u = vx$ for some $x \in X$. In what follows, $T_X$ is simply denoted by $T$. The length of a word $w$ over any set is denoted by $|w|$, which is the number of symbols from the set appearing in $w$ including their repetitions.  Note that the set $X^k$ of all words of length $k$ are the vertices at the level $k$ of $T$. For $u \in X^*$, we write $T_u$ to denote the subtree of $T$ rooted at $u$ consisting of all vertices $v$ such that $u$ is a prefix of $v$, i.e., $v = uu'$ for some $u' \in X^*$. Consider the set $\text{Aut}(T)$ of all graph automorphisms of $T$ preserving the root. Note that $\text{Aut}(T)$ is a group with respect to composition of maps, $gh(u) = h(g(u))$ for $g, h \in \text{Aut}(T)$ and $u \in X^*$. We shall denote the identity element of $\text{Aut}(T)$ by $e$. For $g \in \text{Aut}(T)$, note that $g$ is length preserving, and $g(X^k) = X^k$ for all $k$.

Let $u \in X^*$ and $g \in \text{Aut}(T)$. Since $g$ is length preserving automorphism, for each $v \in X^*$, there exists unique $v' \in X^*$ such that $g(uv) = g(u)v'$. Define $g|_u :T \to T$ by  $g|_u(v) = v'$, called the section of $g$ at $u$. Note that $g|_u \in \text{Aut}(T)$. The sections satisfy the following properties: For any $g, h \in \text{Aut}(T)$ and $u, v \in X^*$, we have $g|_{uv} = g|_u|_v$, $gh|_u = g|_uh|_{g(u)}$ and $g^{-1}|_u = (g|_{g^{-1}(u)})^{-1}$.  

We define the map  $\psi$ from $\text{Aut}(T)$ to the wreath product $\text{Aut}(T)\wr S_d$ by $$\psi(g) = (g|_1, g|_2, \ldots, g|_d)\lambda_g,$$ where $S_d$ is the permutation group on $X$, and $\lambda_g$ is the induced action of $g$ on the set $X$. For $g \in \text{Aut}(T)$, the expression $\psi(g)$ is called the wreath recursion of $g$. Clearly, $\psi$ is an isomorphism. Accordingly, we identify $\text{Aut}(T)$ with $\text{Aut}(T) \wr S_d$ and we will often write $g$ in place of $\psi(g)$ so that $g = (g|_1, g|_2, \ldots, g|_d)\lambda_g$. 

Note that any subgroup $G$ of $\text{Aut}(T)$, denoted $G \le \text{Aut}(T)$, acts on the tree $T$. A subgroup $G \le \text{Aut}(T)$ is said to be spherically transitive if it is transitive on each level of $T$, i.e., for each $k$ and for all $u, v \in X^k$, there exists an element $g \in G$ such that $g(u) = v$. 

For each level $k$, the $k$-th level stabilizer, denoted by $\text{St}_G(\widehat{k})$, is the stabilizer of the set $X^k$, i.e., $\bigcap_{u \in X^k} \text{St}_G(u)$, the intersection of the stabilizers of vertices in the $k$-th level. 
The normal subgroup $\text{St}_G(\widehat{k})$ is precisely the kernel of the induced action of $G$ on the finite set $X^k$, and hence it has finite index in $G$.

The rigid stabilizer of a vertex $u$, denoted by $\text{Rist}_G(u)$, is the subgroup of $G$ containing all the elements that act trivially on the complement of the subtree $T_u$. The $k$-th level rigid stabilizer, denoted by $\text{Rist}_G(\widehat{k})$, is the subgroup $\left\langle \bigcup_{|u| = k}\text{Rist}_G(u)\right\rangle$.

\begin{definition}
	Let $G$ be a spherically transitive subgroup of $\text{Aut}(T)$. The group $G$ is said to be a branch group if the index of $\text{Rist}_G(\widehat{k})$ in $G$ is finite for every level $k$. The group $G$ is called a weakly branch group if, for all $k$, $\text{Rist}_G(\widehat{k})$ is nontrivial, and hence $|\text{Rist}_G(\widehat{k})| = \infty$.
\end{definition}

\begin{definition}
	A subgroup $G \leq \text{Aut}(T)$ is said to be self-similar if $g|_u \in G$, for every $g \in G$ and $u \in X^*$. 
\end{definition}
Notice that for a self-similar group $G \leq \text{Aut}(T)$ the restriction of the map $\psi$ to $G$ $$\psi: G \to G \wr S_d$$ embeds $G$ into the wreath product $G \wr S_d$.

Let $G$ be a self-similar group and $g \in \text{St}_G(\widehat{1})$. The wreath recursion of $g$ is given by $\psi(g) = (g|_1, g|_2, \ldots, g|_d)$ and the induced homomorphism $$\psi_1: \text{St}_G(\widehat{1}) \to \stackrel{d}{G\times \cdots \times G}$$ is an embedding. Due to self-similarity of $G$, the homomorphism $\psi_1$ extends to all $k \in \mathds{N}$, the set of positive integers, in a natural way such that $$\psi_k: \text{St}_G(\widehat{k}) \to \stackrel{d^k}{G\times \cdots \times G}$$ is injective for all $k \in \mathds{N}$. For convenience, we often identify an element $g \in \text{St}_G(\widehat{k})$ with its image $\psi_k(g)$. For any vertex $u \in X^*$ we can define the projection $\pi_u: \text{St}_G(u) \to G$ by $\pi_u(g) = g|_u$, where $g \in \text{St}_G(u)$. Clearly, for every $u \in X^*$, $\pi_u$ is a homomorphism.

\begin{definition}
	A self-similar group $G \leq \text{Aut}(T)$ is said to be fractal if  $\pi_u(\text{St}_G(u)) = G$, for all vertices $u$ in $T$.
\end{definition}

To prove that a group is fractal, it suffices to show that the above condition is satisfied by the vertices of the first level of the tree, as per the following proposition.

\begin{proposition}[\cite{albizuri2016}]\label{f.level}
	If a self-similar group $G \leq \textup{Aut}(T)$ is transitive on the first level and $\pi_{x}(\textup{St}_{G}(x)) = G$ for some $x \in X$, then $G$ is fractal and spherically transitive.
\end{proposition}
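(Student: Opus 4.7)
The plan is to first strengthen the hypothesis from the single vertex~$x$ to every vertex of level~$1$, and then bootstrap from level~$1$ up the tree using the multiplicativity of sections $g|_{xv}=g|_{x}|_{v}$. To establish $\pi_{y}(\text{St}_{G}(y))=G$ for every $y\in X$, I would use transitivity on the first level to choose $g\in G$ with $g(y)=x$. Conjugation then maps $\text{St}_{G}(x)$ onto $\text{St}_{G}(y)$, and a direct application of the section identities recorded above yields
\[
(ghg^{-1})|_{y}\;=\;g|_{y}\cdot h|_{x}\cdot g^{-1}|_{x}
\]
for every $h\in\text{St}_{G}(x)$. As $h$ varies, the middle factor $h|_{x}$ sweeps out all of $G$ by hypothesis, while the outer factors lie in $G$ by self-similarity; hence $\pi_{y}(\text{St}_{G}(y))\supseteq g|_{y}\,G\,g^{-1}|_{x}=G$, and the reverse inclusion is automatic.

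With this first-level upgrade in hand, spherical transitivity follows by induction on the level~$k$. For the inductive step, given $u_{1}=y_{1}v_{1}$ and $u_{2}=y_{2}v_{2}$ in $X^{k+1}$, I would first use level-$1$ transitivity to choose $g\in G$ with $g(y_{1})=y_{2}$, then use the level-$k$ hypothesis to choose $g'\in G$ sending $g|_{y_{1}}(v_{1})$ to $v_{2}$, and finally lift $g'$ via the upgrade to $h\in\text{St}_{G}(y_{2})$ with $h|_{y_{2}}=g'$; the composite $gh$ then sends $u_{1}$ to $u_{2}$ (recall the action convention $gh(u)=h(g(u))$). Fractalness follows by a similar induction on $|u|$: writing $u=xv$ with $x\in X$ and $|v|=k$, and given any $g\in G$, the inductive hypothesis at $v$ yields $h_{0}\in\text{St}_{G}(v)$ with $h_{0}|_{v}=g$, and the first-level upgrade yields $h\in\text{St}_{G}(x)$ with $h|_{x}=h_{0}$. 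Because $h_{0}$ fixes $v$, the element $h$ fixes $xv$, and $h|_{xv}=h|_{x}|_{v}=h_{0}|_{v}=g$, as required.

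The only delicate step is the first-level upgrade, since that is the one place where the transitivity hypothesis and the section calculus must be combined nontrivially; once it is established, the rest of the argument simply chains the level-one statement up the tree through the multiplicativity of sections.
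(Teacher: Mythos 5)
The paper states this proposition without proof, citing \cite{albizuri2016}, so there is no in-paper argument to compare against. Your argument is correct and is essentially the standard proof: under the paper's composition convention $gh(u)=h(g(u))$, the section rules give $(ghg^{-1})|_{y}=g|_{y}\,h|_{x}\,g^{-1}|_{x}$ for $g(y)=x$ and $h\in\text{St}_{G}(x)$, which upgrades the hypothesis from the single vertex $x$ to every first-level vertex, and your two inductions up the tree (for spherical transitivity and for fractalness) are sound.
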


\begin{definition}
	A self-similar group $G \leq \text{Aut}(T)$ is said to be weakly regular branch over a nontrivial subgroup $K \leq G$ if $G$ is spherically transitive and $$\stackrel{d}{K\times \cdots \times K} \le \psi(K \cap \text{St}_G(\widehat{1}))$$
	
	If, additionally, $K$ is of finite index in $G$, then $G$ is said to be regular branch over $K$. 
\end{definition}

\begin{definition}
	A self-similar group $G \leq \text{Aut}(T)$ is said to be contracting if there exists a finite subset $S \subseteq G$ satisfying the following: for every $g \in G$ there is a $k \in \mathds{N}$ such that $g|_u$ belongs to $S$, for all vertices $u$ in $T$ with $|u| \geq k$; otherwise, it is said to be non-contracting.
\end{definition}

\begin{proposition}[\cite{Davis2014}]\label{contracting}
	Let $G \leq \text{Aut}(T)$ be a self-similar group. Suppose that there exist $g\in G$ and $u\in X^{*}$ such that,
	$g|_{u} = g$, $g(u) = u$, and $g$ has infinite order, then $G$ is non-contracting.
\end{proposition}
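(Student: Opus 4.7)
The plan is to argue by contradiction: assume $G$ is contracting, witnessed by a finite set $S \subseteq G$, and derive that $S$ must contain all powers of $g$, contradicting the assumption that $g$ has infinite order.

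First, I would use the composition formulas for sections to propagate the hypotheses. Applying $g|_{uv} = g|_u|_v$ repeatedly together with $g|_u = g$ gives, by a quick induction on $n$, that
\[
g|_{u^n} \;=\; g \quad \text{for all } n \ge 1.
\]
Next, I would promote this from $g$ to every power $g^k$. Using $gh|_u = g|_u\,h|_{g(u)}$ together with the fixed-point hypothesis $g(u) = u$ (hence $g^{k-1}(u) = u$ for all $k$), a short induction on $k$ shows that $g^k|_u = g^k$, and combining with the previous step yields
\[
g^k|_{u^n} \;=\; g^k \quad \text{for all } k, n \ge 1.
\]

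Now I would invoke contraction: for each fixed $k \ge 1$ there exists $N_k \in \mathds{N}$ such that every section of $g^k$ at a vertex of length at least $N_k$ lies in $S$. In particular the vertex $u^{N_k}$ has length $N_k \cdot |u| \ge N_k$, so
\[
g^k \;=\; g^k|_{u^{N_k}} \;\in\; S.
\]
Thus $\{g^k : k \ge 1\} \subseteq S$. Since $g$ has infinite order, the left-hand side is an infinite set, contradicting the finiteness of $S$. Therefore $G$ cannot be contracting.

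I do not expect any real obstacle: the only delicate point is the bookkeeping in the induction $g^k|_u = g^k$, where one has to apply the product formula in the correct order and use $g(u) = u$ at each stage; everything else is a direct appeal to the definitions.
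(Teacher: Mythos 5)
Your argument is correct. Note that the paper itself gives no proof of this proposition --- it is quoted from Davis--Elder--Reeves \cite{Davis2014} --- and your proof is essentially the standard one: the two inductions ($g|_{u^n}=g$ via $g|_{uv}=g|_u|_v$, and $g^k|_u=g^k$ via $gh|_u=g|_u h|_{g(u)}$ together with $g^{k-1}(u)=u$) are carried out with the correct order of composition for this paper's conventions, and the conclusion that $S$ contains the infinitely many distinct powers $g^k$ is a genuine contradiction with finiteness of $S$. The only point worth making explicit is that one must have $u\neq\varepsilon$ (otherwise the hypotheses are vacuous and the claim is false); your step ``$|u^{N_k}|=N_k\cdot|u|\geq N_k$'' silently uses $|u|\geq 1$, which is the intended reading and is satisfied in the paper's application, where $u$ is the vertex $1$.
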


We use the following notations in the rest of the paper. For elements $g, h$ of a group, we write $g^h := h^{-1}gh$ to denote the conjugate of $g$ by $h$, and  $[g, h] := g^{-1}h^{-1}gh$ to denote the commutator of $g$ and $h$. 

For a set $A$, let $\tilde{A} = A \cup A^{-1}$, where $A^{-1} = \{a^{-1}\ :\ a \in A\}$, the set of formal inverses of elements of $A$. Let $w$ be a word over $\tilde{A}$. For $p \in A$, we write $|w|_p$ to denote the exponent sum of $p$ in $w$. Further, $|w|_{A} = \sum_{a \in A} |w|_a$. For any $j \in \mathds{N}$, we write $\overline{j} \in \{1, \ldots, d\}$ such that $\overline{j} \equiv j\  (\text{mod}\; d)$.

\section{A Class of Non-Contracting Groups}
\label{gen_class}

For $d \geq 3$, consider the group $G_d \leq \text{Aut}(T)$ generated by the elements of $A = \{a_1, a_2, \dots, a_d\}$ which are defined recursively as follows:
\begin{eqnarray*}
	a_{1}& = &(a_{1}, a_{2}, e, \dots, e) (1\ 2)\\
	a_{2}& = &(e, a_{2}, a_{3}, e, \dots , e)(2\ 3)\\
	&\vdots&\\
	a_{d-1}& = &(e, \dots , e, a_{d-1}, a_d)(d-1\ d)\\
	a_{d}& = &(a_{1}, e, \dots, e, a_{d})(d\ 1)
\end{eqnarray*}

Clearly, $G_d$ is a self-similar group. In this section, we prove that $G_d$ is a non-contracting group and, further, study certain properties of their elements. We begin with the following remarks regarding the elements of $G_d$, which are useful in the sequel.

\begin{remark}\label{w_len_twice}
	For a word $w$ over $A$, if the wreath recursion of $w$ is $(w_1, w_2, \ldots, w_d)\lambda_w$, then $|w_1w_2 \cdots w_d| = 2|w|$.
\end{remark}

\begin{remark}\label{exp_com_len_d}
	For a word $w$ over $\tilde{A}$, let the wreath recursion of $w$ be $(w_1, w_2, \ldots, w_d)\lambda_w$. Then we have the following:
	\begin{enumerate}[(i)]
		\item\label{exp_com_twice} $|w_1w_2 \cdots w_d|_A = 2|w|_A$.
		\item For $1 \leq i \leq d$, if $|w|_{a_i} = s_i$, then $|w_1w_2 \cdots w_d|_{a_i} = s_{i} + s_{\overline{i - 1}}$.
		
		\item \label{exp_com_len_odd}  
		If $d$ is odd and  $|w_1w_2 \cdots w_d|_{a_i} = t_i$, for $1 \le i \le d$, then \[|w|_{a_i} = \frac{1}{2}\sum_{j = 1}^{d}t_j - \sum_{j = 1}^{\frac{d - 1}{2}}t_{\overline{i + 2j}}\] for all $i$. 
		
		\item\label{odd_zero} In particular, when $d$ is odd, if $|w_1w_2 \cdots w_d|_{a_i} = 0$ for all $i$, then $|w|_{a_i} = 0$, for all $i$.
	\end{enumerate}
\end{remark}

\begin{remark}\label{even_relator}
	If $d$ even, Remark \ref{exp_com_len_d}(\ref{odd_zero}) does not hold, in general. For instance, consider the word $w  = a_2a_1a_3^{-1}a_2a_1^{-1}a_4a_2^{-1}a_1^{-1}$ in $G_4$. Note that $w|_1 = abb^{-1}a^{-1}$, $w|_2 = bd^{-1}db^{-1}$, $w|_3 = cbb^{-1}c^{-1}$ and $w_4 = c^{-1}ca^{-1}a$. Although $|w|_i|_{a_j} = 0$, for all $1 \le i, j \le 4$, we have $|w|_{a_i} \neq 0$ for any $i$.
\end{remark}

The following lemma is useful to prove $G_d$ is non-contracting. 

\begin{lemma}\label{ord_infi}
	The order of the product $a_{1}a_{2}\cdots a_{d}$ is infinite in $G_d$.
\end{lemma}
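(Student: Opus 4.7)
The plan is to set up an infinite descent by exhibiting a vertex $u$ at level~$2$ and an integer $m\ge 2$ such that $b^m(u)=u$ and $b^m|_u=b$, together with divisibility constraints forcing $m\mid n$ whenever $b^n=e$. Then $b^n=e$ will imply $b^{n/m}=e$, contradicting minimality of the order of $b$.

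First I will compute the wreath recursion of $b=a_1a_2\cdots a_d$. A direct telescoping using $(gh)|_x=g|_x\,h|_{g(x)}$ yields
\[
 b=(b,\,a_2a_1,\,a_3,\,a_4,\ldots,a_d)\,\lambda_b,\qquad\lambda_b=(2\ d\ d{-}1\ \cdots\ 3),
\]
a $(d-1)$-cycle fixing~$1$; so $b(1)=1$, $b|_1=b$, and $\lambda_b$ has order $d-1$. Since $b^{d-1}\in\text{St}_{G_d}(\widehat{1})$, tracing the $\lambda_b$-orbit $2\to d\to d{-}1\to\cdots\to 3$ gives
\[
 \beta:=b^{d-1}|_2=(a_2a_1)(a_d a_{d-1}\cdots a_3).
\]
The central computation is to unfold $\beta$. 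I would first establish $\lambda_{a_d a_{d-1}\cdots a_3}=(1\ 3\ 4\ \cdots\ d)$ by a short induction on the number of factors, and then compose with $\lambda_{a_2a_1}=(1\ 2\ 3)$ to obtain $\lambda_\beta=(1\ 2\ 4\ 5\ \cdots\ d)$, a $(d-1)$-cycle fixing~$3$ (which degenerates to $(1\ 2)$ when $d=3$). Parallel computations give $\beta|_1=a_1$, $\beta|_2=a_2a_3$, and $\beta|_j=a_j$ for $4\le j\le d$. Multiplying these sections along the $\lambda_\beta$-orbit $1\to 2\to 4\to\cdots\to d$ of length $d-1$ then yields
\[
 \beta^{d-1}|_1=a_1\cdot(a_2a_3)\cdot a_4\cdots a_d=b.
\]

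Combining these two identities by iterated sectioning, I obtain $b^{(d-1)^2}(21)=21$ and $b^{(d-1)^2}|_{21}=b$. Suppose now that $b^n=e$ for some $n\ge 1$. From $\lambda_b^n=e$ we get $(d-1)\mid n$; writing $n=(d-1)p$, the section identity $b^n|_2=\beta^p$ forces $\beta^p=e$, so $\lambda_\beta^p=e$, so $(d-1)\mid p$, and hence $(d-1)^2\mid n$. Writing $n=(d-1)^2q$, the self-similarity identity above gives $b^n|_{21}=b^q$, and therefore $b^q=e$. If $n$ is taken to be the order of $b$, then $n\mid q=n/(d-1)^2$, forcing $(d-1)^2\le 1$, which is impossible for $d\ge 3$. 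Therefore $b$ has infinite order.

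The main obstacle is the wreath-recursion computation for $\beta$, in particular identifying $\lambda_\beta$ and verifying the clean sections $\beta|_2=a_2a_3$ and $\beta|_j=a_j$ for $j\ge 4$. The interaction of the cycle $(d\ 1)$ from $a_d$ with the neighbouring transpositions $(k\ k{+}1)$ is the bookkeeping-heavy step, and the small case $d=3$ (where $a_d\cdots a_3$ is just $a_3$) needs to be checked separately but falls into the same pattern.
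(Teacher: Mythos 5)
Your proposal is correct and follows essentially the same route as the paper: the same pair of elements $b=a_1\cdots a_d$ and $\beta=(a_2a_1)(a_da_{d-1}\cdots a_3)$, the same mutual recursion $b^{d-1}|_2=\beta$ and $\beta^{d-1}|_1=b$, and an infinite-descent contradiction. The only (cosmetic) difference is the wrap-up: the paper writes the putative order as $m(d-1)^n$ with $(d-1)\nmid m$ and sections along $2121\cdots$ to land outside $\mathrm{St}_{G_d}(\widehat{1})$, whereas you compose the two recursions into $b^{(d-1)^2}|_{21}=b$ and run a divisibility descent; both are valid.
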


\begin{proof}
	Let $g = a_{1} a_{2}\cdots a_{d}$ and $h = a_{2} a_{1} a_{d}a_{d-1}\cdots a_{4}a_{3}$. Note that the section $g|_{2} = a_{1}|_{2} a_{2}|_{1}a_{3}|_{1}\cdots a_{d}|_{1} = a_{2}a_{1}$ and, for each $i \geq 3$, the section \[g|_{i} = a_{1}|_{i} a_{2}|_{i} a_{3}|_{i}\cdots a_{i-1}|_{i} a_{i}|_{i-1} \cdots a_{d}|_{i-1} = a_{i}.\]
	The action of $g$ on the first level of $T$ is given by the permutation $(1\ 2)(2\ 3) \cdots (d\ 1)$, i.e., $(d\ d-1\ldots 3\ 2)$.
	Therefore, $g^{d-1}\in \text{St}_{G_d}(\widehat{1})$ and
	$g^{d-1}|_{2} = g|_{2} g|_{d} g|_{d-1} \cdots g|_{3} = a_{2} a_{1} a_{d} a_{d-1} \cdots a_{3} = h$.
	
	Also, $h|_{1} = a_{2}|_{1}  a_{1}|_{1} a_{d}|_{2} a_{d-1}|_{2} \cdots a_{3}|_{2} = a_{1}$,
	$h|_{2} = a_{2}|_{2} a_{1}|_{3}a_{d}|_{3}a_{d-1}|_{3} \cdots a_{3}|_{3} = a_{2} a_{3}$,
	for $4 \leq i\leq d-1$, $h|_{i} = a_{2}|_{i} a_{1}|_{i} a_{d}|_{i}\cdots a_{i}|_{i} a_{i-1}|_{i+1} \cdots a_{3}|_{i+1} = a_{i}$, and $h|_{d} = a_{2}|_{d} a_{1}|_{d} a_{d}|_{d} a_{d-1}|_{1} \cdots a_{3}|_{1} = a_{d}.$ The action of $h$ on the first level of $T$ is given by the permutation $(2\ 3)(1\ 2)(d\ 1)(d-1\ d)\cdots (3\ 4)$, i.e., $(1\ 2\ 4\ 5 \ldots d)$.
	Therefore, $h^{d-1}\ \in\ \text{St}_{G_d}(\widehat{1})$ and $h^{d-1}|_{1} = h|_{1} h|_{2}h|_{4} \cdots h|_{d} = a_{1} a_{2} a_{3} a_{4}\cdots a_{d} = g.$
	
	Accordingly, for $j \ge 2$, note that
	\begin{align*}
		g^{(d-1)^{j}}|_{2} & =  (g^{d-1}|_{2})^{(d-1)^{j-1}} = h^{(d-1)^{j-1}}, \ \text{and}\\
		h^{(d-1)^{j}}|_{1} & =  (h^{d-1}|_{1})^{(d-1)^{j-1}} = g^{(d-1)^{j-1}}.
	\end{align*}
	If possible, suppose the order of $g$ is $r < \infty$. Then $d-1$ divides $r$; because if $d-1 \notdivides r$ then $g^r \notin \text{St}_{G_d}(\widehat{1})$. Let $r = m(d-1)^n$ for some $m,n \in \mathds{N}$ such that $d-1 \notdivides m$. Then, if $n$ is even,   
	$g^{m(d-1)^n}|_{\underset{\text{length} \ n}{2121\dots 21}} = g^m \notin \text{St}_{G_d}(\widehat{1})$, and, if $n$ is odd, $g^{m(d-1)^n}|_{\underset{\text{length} \ n}{2121\dots 2}} = h^m \notin \text{St}_{G_d}(\widehat{1})$. In both the cases we arrive at a contradiction to the fact that any section of $g^r$ must be in $\text{St}_{G_d}(\widehat{1})$. Hence, the order of the product $a_{1}a_{2}\cdots a_{d}$ is infinite.
\end{proof}

\begin{theorem}\label{th_non-con}
	The group $G_{d}$ is non-contracting.
\end{theorem}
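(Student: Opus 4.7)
The plan is to apply \pref{contracting} with the element $g := a_1 a_2 \cdots a_d$ and the first-level vertex $u = 1$. Three ingredients are required: that $g$ has infinite order, that $g$ fixes the vertex $1$, and that $g|_1 = g$. The first is supplied verbatim by \lref{ord_infi}; the other two are short direct computations from the recursive definitions of the generators.

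For the first-level action, $\lambda_g$ is the product $(1\ 2)(2\ 3)\cdots(d-1\ d)(d\ 1)$, composed under the convention $gh(u) = h(g(u))$. Tracing the symbol $1$ through the transpositions in order gives $1 \mapsto 2 \mapsto 3 \mapsto \cdots \mapsto d \mapsto 1$, so $g$ fixes $1$; this is already implicit in the proof of \lref{ord_infi}, where $\lambda_g$ is identified with the cycle $(d\ d{-}1\ \ldots\ 3\ 2)$.

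For the section $g|_1$, I would apply the product rule $(fh)|_v = f|_v\, h|_{f(v)}$ iteratively. Because $a_i|_i = a_i$ and $a_i(i) = i+1$ for every $1 \le i \le d-1$, and $a_d|_d = a_d$, the calculation telescopes:
\[
g|_1 \;=\; a_1 \cdot (a_2 \cdots a_d)|_2 \;=\; a_1 a_2 \cdot (a_3 \cdots a_d)|_3 \;=\; \cdots \;=\; a_1 a_2 \cdots a_d \;=\; g.
\]
With these three facts in hand, \pref{contracting} applied to $g$ and $u = 1$ immediately yields that $G_d$ is non-contracting. There is no genuine obstacle here: the only substantive work has already been done in \lref{ord_infi}, while the fixed-vertex and self-section properties are routine wreath-recursion bookkeeping.
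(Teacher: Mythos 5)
Your proposal is correct and follows exactly the paper's route: the paper likewise invokes \pref{contracting} with $g = a_1a_2\cdots a_d$ and $u=1$, citing \lref{ord_infi} for the infinite order and noting $g|_1 = g$ and $g(1)=1$. Your added wreath-recursion bookkeeping for the latter two facts is accurate but not a departure from the paper's argument.
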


\begin{proof}
	Note that the section $(a_{1}a_{2}\cdots a_{d})|_{1} = a_{1}a_{2}\cdots a_{d}$ and $a_{1}a_{2}\cdots a_d(1) = 1$. Also, by Lemma \ref{ord_infi}, since the order of $a_{1}a_{2}\cdots a_{d}$ is infinite,  the group $G_{d}$ is non-contracting by Proposition \ref{contracting}.
\end{proof}

We now establish the orders of certain elements of $G_d$ and show that any nonempty word over $A$ does not represent the identity element of $G_d$ (see Theorem \ref{freerelator_d}).

\begin{lemma}\label{order_infinite}
	For $1 \leq i \leq d$, the order of the product $a_{1}a_{2}\cdots a_{i - 1}a_{i}a_{i - 1}\cdots a_{3}a_{2}$ is infinite in $G_d$.
\end{lemma}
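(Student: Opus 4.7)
The plan is to prove the statement by downward induction on $i$, from $d$ down to $1$. Write $g_i := a_1 a_2 \cdots a_{i-1} a_i a_{i-1} \cdots a_3 a_2$ with the convention that empty ranges contribute nothing (so $g_1 = a_1$ and $g_2 = a_1 a_2$). The key reduction is that a suitable power of $g_i$ lies in $\text{St}_{G_d}(\widehat{1})$ and its first-coordinate section recovers $g_{i+1}$ in the range $2 \le i \le d-1$; for the base case $i = d$ it recovers instead the element $a_1 a_2 \cdots a_d$ already handled by Lemma \ref{ord_infi}.

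First I would identify the permutation that $g_i$ induces on $X$. For $2 \le i \le d-1$, applying the telescoping identity $(k\ k+1)(k+1\ k+2)(k\ k+1) = (k\ k+2)$ repeatedly to the palindromic tail $a_2 a_3 \cdots a_{i-1} a_i a_{i-1} \cdots a_2$ collapses it to the transposition $(2\ i+1)$, so $g_i$ induces the $3$-cycle $(1\ i+1\ 2)$ and $g_i^3 \in \text{St}_{G_d}(\widehat{1})$. Starting from $a_d = (d\ 1)$ instead, the same conjugation argument collapses $a_2 \cdots a_{d-1} a_d a_{d-1} \cdots a_2$ to $(1\ 2)$, so $g_d$ itself lies in $\text{St}_{G_d}(\widehat{1})$; and for $i = 1$, $a_1$ induces $(1\ 2)$ so $a_1^2 \in \text{St}_{G_d}(\widehat{1})$.

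Next I would compute the first-level sections using the composition rule $g|_u = \prod_k a_{j_k}|_{P_{k-1}(u)}$, where $P_k$ denotes the $k$th partial product of the word defining $g_i$. Exploiting that each $a_k$ with $k < d$ has nonidentity coordinates only in positions $k$ and $k+1$ (and $a_d$ only in positions $1$ and $d$), and tracking the orbits of $1$, $2$, and $i+1$, for $2 \le i \le d-1$ one obtains
\[
g_i|_1 = a_1 a_2 \cdots a_i, \quad g_i|_2 = a_2, \quad g_i|_{i+1} = a_{i+1} a_i \cdots a_3;
\]
for $i = d$ the analogous trace gives $g_d|_1 = a_1 a_2 \cdots a_d$; and for $i = 1$, $a_1^2|_1 = a_1 \cdot a_2 = g_2$. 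Combining the two steps,
\[
g_i^3|_1 = g_i|_1 \cdot g_i|_{i+1} \cdot g_i|_2 = (a_1 \cdots a_i)(a_{i+1} a_i \cdots a_3)(a_2) = g_{i+1}
\]
for $2 \le i \le d-1$, so whenever $g_{i+1}$ has infinite order the same holds for $g_i^3$, and hence for $g_i$. The base case $i = d$ is supplied by Lemma \ref{ord_infi} applied to $g_d|_1 = a_1 a_2 \cdots a_d$, and the case $i = 1$ follows from $a_1^2|_1 = g_2$.

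The main obstacle is the bookkeeping in the section computations: at each step of the three orbit traces one has to verify that every off-support coordinate $a_k|_{P_{k-1}(u)}$ with $P_{k-1}(u) \notin \{k, k+1\}$ (respectively $\notin \{1, d\}$ for $a_d$) vanishes, which amounts to a short but careful case analysis showing that the orbit of each chosen vertex stays clear of the support of the corresponding generator at each intermediate position.
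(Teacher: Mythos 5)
Your proof is correct and takes essentially the same route as the paper's: a downward induction anchored at $i=d$ via Lemma \ref{ord_infi}, using the identical section computations $g_i|_1 = a_1\cdots a_i$, $g_i|_2 = a_2$, $g_i|_{i+1} = a_{i+1}\cdots a_3$, the $3$-cycle $(1\ i+1\ 2)$ on the first level, and the key identity $g_i^3|_1 = g_{i+1}$. The only (harmless) differences are your conjugation shortcut for identifying the first-level permutation and your explicit treatment of the degenerate cases $i=1,2$, which the paper leaves implicit.
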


\begin{proof}
	For $1 \leq i \leq d$, let $h_i = a_{1}a_{2}\cdots a_{i - 1}a_{i}a_{i - 1}\cdots a_{3}a_{2}$. First we show that the order of $h_d$ is infinite. If possible, suppose the order of $h_d$ is finite, say $r$. Note that the action of $h_d$ on the first level of $T$ is given by the permutation $(1\ 2)(2\ 3) \cdots (d\ 1)(d\ d-1)\cdots (3\ 2)$, i.e., $(d\ d-1\ldots 3\ 2)(2\ 3\ \ldots d)$, which is identity permutation. Therefore, $h_d \in \text{St}_{G_d}(\widehat{1})$. Further, note that $h_d|_1 = a_{1}a_{2}\cdots a_{d}$. Accordingly, $e = h_d^r|_1 = (h_d|_1)^r = (a_{1}a_{2}\cdots a_{d})^r$, which contradicts Lemma \ref{ord_infi} that the order of $a_{1}a_{2}\cdots a_{d}$ is infinite. Hence, the order of $h_d$ is infinite.
	
	We now prove the result through the following statement: for $1 \le i \le d-1$, the order of $h_{i+1}$ is infinite implies that the order of $h_i$ is infinite. Suppose the order of $h_{i+1}$ is infinite.
	
	Note that the sections $h_i|_{1} = a_{1}|_{1} a_{2}|_{2} \cdots a_{i}|_{i}a_{i - 1}|_{1}\cdots a_{3}|_{1}a_{2}|_{1} = a_{1} a_{2} \cdots a_{i}$, $h_i|_{2} = a_{1}|_{2} a_2|_{1}\cdots a_{i}|_{1} a_{i - 1}|_{1} \cdots a_{2}|_{1} = a_{2}$, and $h_i|_{i+1} = a_{1}|_{i + 1}\cdots a_{i}|_{i + 1} a_{i - 1}|_{i} \cdots a_{2}|_{3}$ $= a_{i + 1}a_{i} \cdots a_{3}$.
	Also, the action of $h_i$ on the first level of $T$ is given by the permutation $(1\ 2) \cdots (i\ i + 1) (i - 1\ i) \cdots (2\ 3)$, i.e., $(i + 1\ 2\ 1)$.
	Therefore, $h_{i}^{3}\in \text{St}_{G_d}(\widehat{1})$. Observe that
	$h_{i}^{3}|_{1} = h_{i}|_{1} h_{i}|_{i+1} h_i|_{2} = a_{1}a_{2} \cdots a_{i}a_{i+1}a_{i}a_{i-1}\cdots a_{3}a_{2} = h_{i + 1}$.
	
	If possible, suppose the order of $h_{i}$ is $s < \infty$. Then $3$ divides $s$; because if $3 \notdivides s$, then $h_{i}^s \notin \text{St}_{G_d}(\widehat{1})$. Let $s = 3t$ for some $t \in \mathds{N}$. Thus, we have $e = h_{i}^s|_1 = (h_{i}^{3}|_1)^t = h_{i + 1}^t$, which contradicts our assumption that the order of $h_{i + 1}$ is infinite. Hence, the order of $h_{i}$ is infinite.
\end{proof}

\begin{lemma}\label{ord_infinite}
	For $1 \leq i < d$, the order of the product $a_{1}a_{2}\cdots a_{i}$ is infinite in $G_d$.
\end{lemma}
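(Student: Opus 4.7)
My strategy is to reduce the claim to \lref{order_infinite} by exhibiting a suitable section of a power of $g_i := a_1 a_2 \cdots a_i$ that is precisely the word $h_{i+1} = a_1 a_2 \cdots a_i a_{i+1} a_i \cdots a_3 a_2$, which was just shown to have infinite order.

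First I would compute the portrait of $g_i$. Recall that for $k < d$ we have $a_k|_k = a_k$, $a_k|_{k+1} = a_{k+1}$, and $a_k|_j = e$ for all other $j$, while $a_k$ acts on the first level as the transposition $(k\ k+1)$. Tracking the orbit of each $j$ under the partial products $a_1 \cdots a_{k-1}$ and applying the cocycle identity $(g_1 g_2)|_u = g_1|_u \cdot g_2|_{g_1(u)}$ iteratively, I expect to obtain
\begin{align*}
g_i|_1 &= g_i, \\
g_i|_j &= a_j \quad \text{for } 2 \le j \le i+1, \\
g_i|_j &= e \quad \text{for } i+1 < j \le d,
\end{align*}
with the first-level action of $g_i$ given by $(1\ 2)(2\ 3)\cdots (i\ i+1)$, which, under the convention $gh(u) = h(g(u))$, is the cycle $(1\ i+1\ i\ \ldots\ 3\ 2)$ of length $i+1$.

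Suppose now, for contradiction, that $g_i$ has finite order $r$. Since $g_i$ acts as an $(i+1)$-cycle on the first level, $g_i^r \in \text{St}_{G_d}(\widehat{1})$ forces $(i+1)\mid r$; write $r = s(i+1)$ with $s \ge 1$. Using the cycle description above, the iterated cocycle identity gives
\[
g_i^{i+1}|_1 = g_i|_1 \cdot g_i|_{g_i(1)} \cdot g_i|_{g_i^2(1)} \cdots g_i|_{g_i^i(1)} = g_i \cdot a_{i+1} \cdot a_i \cdot a_{i-1} \cdots a_2 = a_1 a_2 \cdots a_i a_{i+1} a_i \cdots a_3 a_2 = h_{i+1}.
\]
Consequently $e = g_i^r|_1 = \bigl(g_i^{i+1}|_1\bigr)^s = h_{i+1}^s$. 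Since $i+1 \le d$, \lref{order_infinite} guarantees that $h_{i+1}$ has infinite order, forcing $s = 0$, which contradicts $s \ge 1$. Hence the order of $g_i$ is infinite.

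The main delicacy lies in the bookkeeping for the portrait of $g_i$ and the exact form of the first-level permutation; once these are established, the reduction to \lref{order_infinite} is essentially mechanical. One should also track carefully the left-to-right composition convention adopted in the paper when writing out $g_i^{i+1}|_1$ as the ordered product of sections along the orbit of $1$.
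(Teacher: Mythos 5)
Your proposal is correct and follows essentially the same route as the paper: compute the sections of $g_i$ and its first-level action as an $(i+1)$-cycle, observe $g_i^{i+1}|_1 = h_{i+1}$, and derive a contradiction with \lref{order_infinite}. The section and permutation computations match the paper's, so no further comment is needed.
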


\begin{proof}
	For $1 \leq i < d$, let $g_i = a_{1} a_{2}\cdots a_{i}$. Note that the section $g_i|_{1} = a_{1}|_{1} a_{2}|_{2} \cdots a_{i}|_{i}$ $= a_{1} a_{2} \cdots a_{i}$, and, for $2 \leq j \leq i+1$, the section $g_i|_{j} = a_{1}|_{j} \cdots a_{j-1}|_{j} a_{j}|_{j-1} \cdots a_{i}|_{j-1}$ $= a_{j}$.
	Also, the action of $g_i$ on the first level of $T$ is given by the permutation $(1\ 2)(2\ 3) \cdots (i\ i + 1)$, i.e., $(i + 1\ i\ldots 2\ 1)$.
	Thus, $g_{i}^{i + 1}\in \text{St}_{G_d}(\widehat{1})$. Note that
	$g_{i}^{i + 1}|_{1} = g_{i}|_{1} g_{i}|_{i+1} g_i|_{i} \cdots g_i|_{2} = a_{1}a_{2} \cdots a_{i}a_{i+1}a_{i}a_{i-1}\cdots a_{3}a_{2} = h\ (\textit{say})$.
	
   If possible, suppose the order of $g_{i}$ is finite, say $r$. Then $i + 1$ divides $r$; because if $i + 1 \notdivides r$ then $g_{i}^r \notin \text{St}_{G_d}(\widehat{1})$. Let $r = (i + 1)s$, for some $s \in \mathds{N}$. Thus, we have $e = g_{i}^r|_1 = (g_{i}^{i + 1}|_1)^s = h^s$, which contradicts Lemma \ref{order_infinite} that the order of $h$ is infinite. Hence, the order of $g_{i}$ is infinite.
\end{proof}

\begin{corollary}\label{ord_of_gen}
	The order of $a_1$ is infinite in $G_d$. Hence, by symmetry, the order of $a_i$ is infinite in $G_d$, for $2 \le i \le d$.
\end{corollary}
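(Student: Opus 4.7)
The plan splits into two independent parts. The first statement---that $a_1$ has infinite order---is immediate: it is the $i = 1$ case of Lemma \ref{ord_infinite}, in which the product $a_1 a_2 \cdots a_i$ reduces to the single generator $a_1$. No further work is needed for that half.

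The second statement I would deduce from the evident cyclic symmetry among the defining recursions of $a_1, \ldots, a_d$. Introduce the coordinate-shift automorphism $\alpha \in \text{Aut}(T)$ defined by $\alpha(x_1 x_2 \cdots x_k) = \overline{x_1 + 1}\,\overline{x_2 + 1}\cdots \overline{x_k + 1}$; equivalently, $\alpha$ satisfies the wreath recursion
\[\alpha = (\alpha, \alpha, \ldots, \alpha)(1\ 2\ \cdots\ d).\]
The key claim is that $\alpha^{-1} a_i \alpha = a_{\overline{i+1}}$ for every $i$.

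To verify the claim, I would compute the wreath recursion of $b_i := \alpha^{-1} a_i \alpha$ using the identities $gh|_u = g|_u\, h|_{g(u)}$, $g^{-1}|_u = (g|_{g^{-1}(u)})^{-1}$, and $\alpha|_u = \alpha$ for every $u$. The outcome should be that $b_i$ permutes the first level by $(\overline{i+1}\ \overline{i+2})$, has sections $b_i$ at $\overline{i+1}$ and $b_{\overline{i+1}}$ at $\overline{i+2}$, and is trivial on the remaining sections. Since the family $\{a_{\overline{i+1}}\}_{i=1}^{d}$ satisfies an identical recursive system with the same first-level actions, a straightforward induction on levels shows $b_i = a_{\overline{i+1}}$.

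Consequently $a_i = \alpha^{-(i-1)} a_1 \alpha^{i-1}$, so every $a_i$ is conjugate to $a_1$ in $\text{Aut}(T)$. Since conjugation preserves order and $a_1$ has infinite order by the first part, every $a_i$ has infinite order. The main bookkeeping hurdle is the index wrap-around at $i = d$, where the transposition arising from the computation reads $(d+1\ d+2) \equiv (1\ 2)$ and must be recognized as the first-level action of $a_1 = a_{\overline{d+1}}$; once indices are consistently reduced via the bar notation, the verification is routine.
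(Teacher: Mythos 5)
Your proposal is correct and matches the paper's intent: the first claim is exactly the $i=1$ case of Lemma \ref{ord_infinite}, and the second is the paper's appeal to ``symmetry,'' which you make rigorous by exhibiting the shift automorphism $\alpha$ with $\alpha^{-1}a_i\alpha = a_{\overline{i+1}}$ (a computation that checks out, including the wrap-around at $i=d$). This is the same approach, just with the symmetry argument spelled out explicitly rather than left implicit.
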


\begin{theorem}\label{freerelator_d}
	If $w$ is a nonempty word over $A$, then $w$ does not represent the identity element of $G_d$.
\end{theorem}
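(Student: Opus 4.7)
The plan is to proceed by strong induction on the length $n = |w|$. The base case $n = 1$ is immediate from \cref{ord_of_gen}, since every generator has infinite order. For the inductive step I would assume the result for all shorter words and suppose, for contradiction, that $w = a_{i_1}\cdots a_{i_n}$ with $n \ge 2$ represents the identity of $G_d$. Then $\lambda_w = e$, so $w$ lies in $\text{St}_{G_d}(\widehat{1})$, and the sections $w_1,\dots,w_d$ are themselves words over $A$ (no inverses can arise, since every section of a generator is either $e$ or some $a_j$). Each $w_i$ represents the identity, and their lengths satisfy $\sum_i |w_i| = 2n$ by \rref{exp_com_len_d}(\ref{exp_com_twice}).

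The first step is a per-section length bound. Writing $\sigma_k = \lambda_{i_1}\cdots\lambda_{i_{k-1}}$, each letter $a_{i_k}$ contributes a non-identity factor at exactly two \emph{distinct} positions of $\psi(w)$, namely $\sigma_k^{-1}(i_k)$ and $\sigma_k^{-1}(\overline{i_k+1})$; summing over the $n$ letters gives $|w_i| \leq n$ for every $i$. The inductive hypothesis rules out $0 < |w_i| < n$, since such a $w_i$ would be a nonempty shorter word representing the identity. Hence $|w_i|\in\{0,n\}$ for every $i$, and because the lengths sum to $2n$, either all sections are empty or exactly two of them, say $w_p$ and $w_q$, have length $n$ while the rest are empty. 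The first alternative is ruled out immediately, since \rref{exp_com_len_d}(ii) would then give $|w|_{a_j}+|w|_{a_{\overline{j-1}}}=0$ for all $j$, forcing $w$ to be empty.

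The main difficulty is ruling out the remaining alternative. Since only positions $p$ and $q$ receive non-identity contributions, each letter must place \emph{both} of its live positions inside $\{p,q\}$, which is equivalent to the permutation constraint
\[\sigma_k(\{p,q\}) = \{i_k,\overline{i_k+1}\}\qquad(1\le k\le n).\]
Applying this at $k=1$ (where $\sigma_1 = e$) forces $\{p,q\}$ to be a pair of consecutive residues modulo $d$; after relabeling $p$ and $q$, I may assume $i_1 = p$ and $q = \overline{p+1}$. A short induction on $k$, using that $\sigma_k = \lambda_p^{k-1}$ fixes $\{p,q\}$ setwise and that $\overline{p+2}\neq p$ (this is where $d\ge 3$ enters crucially), then shows $i_k = p$ for every $k$. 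Consequently $w = a_p^n$, but $a_p^n \neq e$ by \cref{ord_of_gen}, contradicting $w = e$.

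The main obstacle I expect is the final structural analysis: establishing the bound $|w_i|\le n$ from the wreath-recursion structure of the generators, and then tracking how the accumulated permutations $\sigma_k$ interact with the consecutive-pair constraint to pin $w$ down as a power of a single generator. The rest of the argument is bookkeeping with \rref{exp_com_len_d} together with the previously established infinite orders.
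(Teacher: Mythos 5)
Your proof is correct, and while it shares the paper's overall skeleton (induction on $|w|$, passing to first-level sections, and invoking \cref{ord_of_gen} for powers of a single generator), the key step of the inductive argument is carried out by a genuinely different mechanism. The paper argues locally: any $w$ that is not a power of one generator contains a subword $a_ia_j$ with $i\ne j$, and an explicit case analysis of the sections of $a_ia_j$ (according to whether $j$ is $\overline{i+1}$, $\overline{i-1}$, or neither) produces a position $l$ with $|a_ia_j|_l|=1$, hence a section of $w$ of length strictly between $0$ and $|w|$ to which the inductive hypothesis applies. You instead argue globally: each letter contributes a single-letter factor to exactly two sections, so $\sum_i|w_i|=2n$ with each $|w_i|\le n$; the inductive hypothesis then forces every $|w_i|\in\{0,n\}$, hence exactly two sections of length $n$, and the constraint $\sigma_k(\{p,q\})=\{i_k,\overline{i_k+1}\}$ for all $k$ (with $\sigma_1=e$ and $\overline{p+2}\ne p$ since $d\ge 3$) pins $w$ down as $a_p^n$, contradicting \cref{ord_of_gen}. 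Your route trades the paper's three-case computation of $a_ia_j|_l$ for a counting-plus-permutation-tracking lemma characterizing the words whose sections concentrate in two coordinates; it is arguably cleaner in that it never needs the explicit section tables, at the cost of the extra structural argument at the end. Both proofs are complete and rely on the same external inputs (\rref{exp_com_len_d} and \cref{ord_of_gen}).
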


\begin{proof}
	We prove the statement by induction on the length $|w|$. For $1 \le i \le d$, since $a_i$ is non-identity element of $G_d$, the statement is true for $|w| = 1$. Let $w$ be a word over $A$ of length $n+1$. If $w$ is of the form $a_{i}^{n+1}$, for some $i \in \{1, \ldots, d\}$, then the statement follows from Corollary \ref{ord_of_gen}. Otherwise, $w$ contains a subword of the form $a_ia_j$, for $i \ne j$. In the following, we observe that a section of $a_ia_j$ has length one by considering $j$ in the cases $\overline{i + 1}, \overline{i - 1}$, or otherwise. Note that, for $1 \le i \le d$, 
	\[\begin{array}{rcl}
		a_ia_{\overline{i + 1}}|_l &=& \left\{ \begin{array}{ll}
												a_ia_{\overline{i + 1}}, & \text{if }\ l = i;\\
												a_{l}, & \text{if}\ l \in \{\overline{i + 1}, \overline{i + 2}\};\\
												e, &  \text{otherwise}.
												\end{array}
										\right.  \\
									&&\\
		a_{i}a_{\overline{i - 1}}|_l &=& \left\{ \begin{array}{ll}
												a_l, & \text{if}\ l \in \{i, \overline{i - 1}\};\\
												a_{\overline{i + 1}}a_{i}, & \text{if }\ l = \overline{i + 1};\\
												e, &  \text{otherwise}.
											\end{array}
										\right.
	\end{array}\]
	For $d > 3$, if $1 \le i, j \le d$ with $j \notin \{\overline{i - 1}, i, \overline{i + 1}\}$, we have
	\[\begin{array}{rcl}
		a_ia_{j}|_l &=& \left\{ \begin{array}{ll}
									a_l, & \text{if }\ l \in \{i, j, \overline{i + 1}, \overline{j + 1}\};\\
									e, &  \text{otherwise}.
								\end{array}
						\right.
	\end{array}\]
  Thus, there exists at least one $l$ such that $|a_ia_j|_l| = 1$. Consequently, there exists $k$ such that  $1 \leq |w|_{k}| < |w|$. Therefore, by inductive hypothesis $w|_k$ is non-identity in $G_d$. Hence, $w$ is non-identity in $G_d$.
\end{proof}

\section{A Class of Fractal and Weakly Regular Branch Groups}
\label{odd_class}

We now restrict the values of $d$ to odd integers and consider the corresponding class of non-contracting groups defined in Section \ref{gen_class}. We establish some important properties of the groups in this class, including just infiniteness, and word problem. Further, we show that a group in the class is fractal, spherically transitive, and weakly regular branch group over its commutator subgroup.  

\begin{theorem}\label{exponent_sum_d}		
	Let $d$ be odd. Suppose $w$ is a word over $\tilde{A}$ representing the identity element of $G_d$. Then $|w|_p = 0$, for all $p \in A$.
\end{theorem}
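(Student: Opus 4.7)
The plan is to prove this by strong induction on the length $n=|w|$, with the base case $n=0$ being trivial. For the inductive step, $w=e$ implies $w\in\text{St}_{G_d}(\widehat{1})$, so the wreath recursion $\psi(w)=(w_1,\dots,w_d)$ yields sections that each represent the identity in $G_d$, where each $w_i$ is viewed as the specific word obtained by concatenating the sections of the letters of $w$. The main engine will be Remark 3.2(iv): if one can establish $\sum_{i=1}^{d}|w_i|_{a_j}=0$ for every $j$, then $|w|_{a_j}=0$ for every $j$ follows at once, so the strategy is to apply the inductive hypothesis to each $w_i$.

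Since each letter $a_j^{\pm 1}$ has exactly two non-trivial sections (at positions $j$ and $\overline{j+1}$), every letter of $w$ contributes at most one letter to each $w_i$, giving $|w_i|\leq n$ and $\sum_i|w_i|=2n$ by Remark 3.1. Consequently, the set $I:=\{i:|w_i|=n\}$ has cardinality at most two, and the argument splits into three cases. If $|I|=0$, every section has length strictly less than $n$; the inductive hypothesis gives $|w_i|_{a_j}=0$ for all $i,j$, and Remark 3.2(iv) finishes. If $|I|=2$, say $I=\{i_0,i_1\}$, then every letter of $w$ must contribute to both $w_{i_0}$ and $w_{i_1}$, forcing $\{i_0,i_1\}=\{j_1,\overline{j_1+1}\}$ at the first position; since the transposition of each $u_k=a_{j_k}^{\pm 1}$ stabilizes this pair, propagating the condition yields $j_k=j_1$ for every $k$. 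So $w$ is a word in $\{a_{j_1},a_{j_1}^{-1}\}$ and, as a group element, $w=a_{j_1}^{m}$ with $m=\sum_k\epsilon_k$; combining $w=e$ with the infinite order of $a_{j_1}$ (Corollary 3.6) gives $m=0=|w|_{a_{j_1}}$, while the remaining exponent sums vanish trivially.

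The main obstacle is the case $|I|=1$, where exactly one section $w_{i_0}$ has length $n$. The inductive hypothesis still gives $|w_i|_{a_j}=0$ for $i\neq i_0$, so Remark 3.2(ii) reduces to $T(s(w))=s(w_{i_0})$, where $T=I+P$ is the $\mathds{Z}$-linear map described there and $s(w):=(|w|_{a_j})_j$. Applying the case analysis recursively to the length-$n$ relator $w_{i_0}$, one either eventually reaches a subcase $|I|=0$ or $|I|=2$ at some deeper stage — which, after backtracking via invertibility of $T$ over $\mathds{Q}$ (for $d$ odd, all eigenvalues $1+\omega^j$ with $\omega=e^{2\pi i/d}$ are nonzero), forces $s(w)=0$ — or else produces an infinite sequence $w=w^{(0)},w^{(1)},w^{(2)},\dots$ of length-$n$ relators satisfying $s(w^{(k+1)})=T(s(w^{(k)}))$. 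Since length-$n$ words over $\tilde A$ are finite in number the sequence is eventually periodic, so $T^p(s(w))=s(w)$ for some $p\geq 1$. The final step, concluding $s(w)=0$ from the integrality and boundedness of $s(w)$ together with the periodicity relation and a careful eigenvalue analysis of $T$, is the principal difficulty of the proof.
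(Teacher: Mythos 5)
Your cases $|I|=0$ and $|I|=2$ are fine, but the case $|I|=1$ contains a genuine gap, and the completion you sketch for it cannot work as stated. When the deterministic sequence $w=w^{(0)},w^{(1)},\dots$ of length-$n$ relators becomes periodic, all you extract is $T^p(s(w))=s(w)$ with $T=I+P$ and $s(w)\in\mathds{Z}^d$. For $d$ odd this makes $T$ invertible, but it does \emph{not} force $s(w)=0$: the eigenvalues of $T$ are $1+\omega^j$, and $|1+\omega^j|=2|\cos(\pi j/d)|=1$ exactly when $j/d\in\{1/3,2/3\}$, in which case $1+\omega^j$ is a primitive sixth root of unity. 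So whenever $3\mid d$ --- including $d=3$, the case the paper ultimately needs --- $T^6$ fixes pointwise a $2$-dimensional rational subspace containing nonzero integer vectors. Concretely, for $d=3$ the vector $v=(2,-1,-1)$ satisfies $T^6v=v$ (its orbit is $(2,-1,-1)\mapsto(1,1,-2)\mapsto(-1,2,-1)\mapsto(-2,1,1)\mapsto(-1,-1,2)\mapsto(1,-2,1)\mapsto(2,-1,-1)$), and it is integral and bounded. Hence periodicity, integrality and boundedness together are insufficient; you would need to exploit the recurrence of the actual \emph{words} $w^{(k)}$, not just their exponent vectors, and you give no argument for that. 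As written, the proof is incomplete precisely where you say the ``principal difficulty'' lies.

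The paper sidesteps this entirely with a normalization you are missing. By Theorem \ref{freerelator_d}, a nontrivial relator must contain a letter of $A$ and a letter of $A^{-1}$, so some cyclic conjugate $w'$ of $w$ (which has the same exponent sums) ends in $a_i^{-1}a_j$. The key computation is that every section of $a_i^{-1}a_j$ with $i\ne j$ has length at most $1$ (one of the two contributions cancels), whereas a positive pair $a_ia_{\overline{i+1}}$ has a section of length $2$. Consequently every section of $w'=w_0a_i^{-1}a_j$ has length at most $|w_0|+1<|w'|$, the inductive hypothesis applies to \emph{all} $d$ sections simultaneously, and Remark \ref{exp_com_len_d}(\ref{exp_com_len_odd}) (your Remark 3.2(iv), i.e.\ injectivity of $T$ for odd $d$) finishes in one step --- no case analysis on $|I|$ and no dynamical argument are needed. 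If you want to salvage your approach, you should import this conjugation trick to destroy the $|I|=1$ case rather than try to analyze it.
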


\begin{proof}
	We prove the statement by induction on the length $|w|$. In view of Theorem \ref{freerelator_d}, if $w$ is a word over $A$ or over $A^{-1}$, then $w$ is a non-identity element of $G_d$. It can be easily verified that every reduced word $w$ such that $1\leq |w|\leq 2$ represents a non-identity element of $G_d$. Suppose the statement is true for all words of length $\leq n$. Let us consider a word $w$ of length $n + 1$ that represents the identity element in $G_d$. Then clearly $w$ contains at least one symbol from $A$ and one symbol from $A^{-1}$. Note that we can get a circular permutation on the symbols of $w$ obtained by applying conjugation on $w$. Let $w'$ be such a conjugate of $w$ such that $|w| = |w'|$ and $w' = w_0 a_i^{-1} a_j$, for some $i, j$. 
	
	If $i = j$, then $w^{\prime} = w_0$. Since $|w_0| < |w|$, by inductive hypothesis, $|w_0|_p = 0$ for all $p \in A$. Thus, $|w'|_p = 0$ for all $p \in A$. Since $w'$ is a conjugate of $w$, we have $|w|_p = 0$ for all $p \in A$. 
	
	Suppose $i \ne j$. In the following, we observe that for each $k\in \{1, \ldots, d\}$  section $a_i^{-1}a_j|_k$ has length at most one by considering $j$ in the cases $\overline{i + 1}, \overline{i - 1}$, or otherwise. Note that, for $1 \le i \le d$,
	\[\begin{array}{rcl}
		a_i^{-1}a_{\overline{i + 1}}|_l &=& \left\{ \begin{array}{ll}
			a_{i}^{-1}, & \text{if }\ l = \overline{i + 1};\\
			a_{\overline{i + 2}}, & \text{if}\ l = \overline{i + 2};\\
			e, &  \text{otherwise}.
		\end{array}
		\right.  \\
		&&\\
		a_{i}^{-1}a_{\overline{i - 1}}|_l &=& \left\{ \begin{array}{ll}
			a_l, & \text{if}\ l = \overline{i - 1};\\
			a_{\overline{i + 1}}^{-1}, & \text{if }\ l = i;\\
			e, &  \text{otherwise}.
		\end{array}
		\right.
	\end{array}\]
	For $d > 3$, if $1 \le i, j \le d$ with $j \notin \{\overline{i - 1}, i, \overline{i + 1}\}$, we have
	\[\begin{array}{rcl}
		a_i^{-1}a_{j}|_l &=& \left\{ \begin{array}{ll}
			a_{\overline{i + 1}}^{-1}, & \text{if }\ l = i;\\
			a_{i}^{-1}, & \text{if }\ l = \overline{i + 1};\\
			a_{l}, & \text{if }\ l \in \{j, \overline{j + 1}\};\\
			e, &  \text{otherwise}.
		\end{array}
		\right.
	\end{array}\]
	Thus, for all $l \in \{1, \ldots, d\}$, 
	\begin{equation}\label{len_redu}
		|a_i^{-1}a_j|_l| \leq 1 
	\end{equation}
	For $l \in \{1, \ldots, d\}$, note that
	$$|w'|_l| = |w_0a_i^{-1}a_j|_l| \leq |w_0|_l|+|a_i^{-1}a_j|_{w_0(l)}| \leq |w_0| + 1 < |w_0| + 2 = |w'|.$$	
	Since $w'$ represents the identity element in $G_d$,  $w'|_l = e$ for all $l$.
	By inductive hypothesis, the exponent sum $|w'|_{l}|_{a_k}$ is zero, for all $k, l$.  Hence, for all $p \in A$, $|w'|_1 w'|_2 \cdots w'|_d|_p = 0$. Since $d$ is odd, by Remark \ref{exp_com_len_d}(\ref{exp_com_len_odd}), we have $|w'|_p = 0$ for all $p \in A$, and hence, $|w|_p = 0$ for all $p \in A$.	
\end{proof}	

\begin{remark}
Theorem \ref{exponent_sum_d} is not true when $d$ is even. In fact, the word $w$ given in Remark \ref{even_relator} is a relator in $G_4$, whereas $|w|_p \ne 0$ for any $p \in A$. 
\end{remark}

In the following theorem we obtain the abelianization of $G_d$ and accordingly show that $G_d$ is not just infinite.

\begin{theorem}\label{quotient}
	For odd values of $d$, the quotient group $G_d/G_d'$ is isomorphic to $\mathds{Z} \times \stackrel{d}{\cdots} \times \mathds{Z}$.
\end{theorem}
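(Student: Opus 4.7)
The plan is to construct a surjective homomorphism $\Phi \colon G_d \to \mathds{Z}^d$ whose kernel is exactly the commutator subgroup $G_d'$ and then invoke the first isomorphism theorem. Start at the level of the free group $F(A)$ on the alphabet $A = \{a_1, \ldots, a_d\}$ by defining
\[
\Phi(w) = (|w|_{a_1}, |w|_{a_2}, \ldots, |w|_{a_d})
\]
for every word $w$ over $\tilde{A}$. This is manifestly a homomorphism $F(A) \to \mathds{Z}^d$ that sends each $a_i$ to the $i$-th standard basis vector. The crucial step is to check that $\Phi$ descends to a well-defined map on $G_d$, which requires every word over $\tilde{A}$ representing the identity in $G_d$ to lie in $\ker \Phi$; this is precisely the conclusion of Theorem \ref{exponent_sum_d}, and it is the heart of the argument. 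Consequently, $\Phi$ induces a surjective homomorphism $\Phi \colon G_d \to \mathds{Z}^d$.

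Next, since $\mathds{Z}^d$ is abelian, $\Phi$ factors through the abelianization to give a surjection $\bar{\Phi} \colon G_d/G_d' \to \mathds{Z}^d$. For injectivity, suppose $g \in G_d$ satisfies $\Phi(g) = 0$, and pick any word $w$ in $F(A)$ representing $g$, so that $|w|_{a_i} = 0$ for all $i$. In the free group $F(A)$, any word whose exponent sums all vanish lies in $[F(A), F(A)]$, by the classical identification $F(A)^{\mathrm{ab}} \cong \mathds{Z}^d$. Pushing $w$ forward along the canonical surjection $F(A) \twoheadrightarrow G_d$ yields $g \in [G_d, G_d] = G_d'$, so $g G_d'$ is trivial. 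Hence $\bar{\Phi}$ is an isomorphism $G_d/G_d' \cong \mathds{Z}^d$.

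The only non-routine ingredient is Theorem \ref{exponent_sum_d}, whose proof crucially uses the oddness of $d$ via Remark \ref{exp_com_len_d}(\ref{exp_com_len_odd}); the identification of the abelianization with $\mathds{Z}^d$ is then a clean application of the first isomorphism theorem. As an immediate consequence, $G_d'$ has infinite index in $G_d$, so $G_d$ is not just infinite, which is the application the authors advertise.
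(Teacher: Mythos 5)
Your proof is correct and is essentially the paper's argument: both reduce the statement to Theorem \ref{exponent_sum_d} (every relator over $\tilde{A}$ has zero exponent sum in each generator), combined with the standard fact that elements of the commutator subgroup are represented by words of zero exponent sum. You package this as a surjection $G_d \to \mathds{Z}^d$ with kernel exactly $G_d'$ via the first isomorphism theorem, while the paper argues directly that distinct tuples $(i_1,\ldots,i_d)$ yield distinct cosets $a_1^{i_1}\cdots a_d^{i_d}G_d'$; the mathematical content is the same.
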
	

\begin{proof}
	We prove that for any $(i_1, i_2, \ldots, i_d) \neq (0, 0, \ldots, 0)$ with $i_j \in \mathds{Z}$, we have $a_1^{i_1}a_2^{i_2}\cdots a_d^{i_d} \notin G_d'$. On the contrary, suppose $w = a_1^{i_1}a_2^{i_2}\cdots a_d^{i_d} \in G_d'$ for some $i_j \in \mathds{Z}$ ($1 \le j \le d$) with at least one $i_j \ne 0$.  Since every element of $G_d'$ is generated by commutators and their conjugates, $w$ equals a word $w'$ in $G_d$ such that the exponent sum $|w'|_p = 0$ for all $p \in A$.  Thus, $w'w^{-1}$ is a relator in $G_d$ with $|w'w^{-1}|_p \ne 0$ for some $p \in A$, which contradicts Theorem \ref{exponent_sum_d}. Hence, $\{a_1^{i_1}a_2^{i_2}\cdots a_d^{i_d}: i_1, i_2, \ldots i_d \in \mathds{Z}\}$ is set of representatives for the cosets of $G_d/G_d'$ so that $G_d/G_d'$ is isomorphic to $\mathds{Z} \times \stackrel{d}{\cdots} \times \mathds{Z}$.
\end{proof}

\begin{corollary}
	For odd values of $d$, the group $G_d$ is not just infinite.
\end{corollary}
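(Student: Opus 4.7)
The plan is to exhibit the commutator subgroup $G_d'$ as a witness that $G_d$ fails to be just infinite; recall that a group is just infinite precisely when it is infinite and every nontrivial normal subgroup has finite index. So I need three things: $G_d$ is infinite, $G_d'$ is nontrivial, and $G_d'$ has infinite index in $G_d$. Two of these are essentially handed to us by earlier results.

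First, $G_d$ is infinite because $a_1$ has infinite order by \cref{ord_of_gen}. Next, $G_d'$ has infinite index in $G_d$: this is immediate from \tref{quotient}, since $G_d/G_d' \cong \mathds{Z}^d$, which is infinite. The only remaining point is that $G_d'$ is nontrivial, equivalently that $G_d$ is non-abelian. For this I would simply note that the induced actions of $a_1$ and $a_2$ on the first level are the transpositions $(1\ 2)$ and $(2\ 3)$, which do not commute in $S_d$, so $a_1 a_2 \ne a_2 a_1$ in $G_d$. (Alternatively, one may appeal directly to \tref{exponent_sum_d}: if $G_d$ were abelian then, modulo $G_d'$, all nonempty words over $A$ would be distinguished from the identity by exponent sums, but more directly the level-$1$ argument is instant.)

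Putting these three ingredients together, $G_d'$ is a nontrivial proper normal subgroup of infinite index in the infinite group $G_d$, so $G_d$ is not just infinite. I do not anticipate any genuine obstacle; the substance is entirely contained in \tref{quotient}, and the corollary is really just a reformulation of it together with a one-line nontriviality check.
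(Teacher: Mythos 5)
Your proof is correct and follows the same route the paper intends: the corollary is stated as an immediate consequence of \tref{quotient}, with $G_d'$ serving as the nontrivial normal subgroup of infinite index. Your explicit check that $G_d'$ is nontrivial (via the non-commuting level-one permutations $(1\ 2)$ and $(2\ 3)$) is a detail the paper leaves implicit, and including it is a sensible completion of the argument.
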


Recall that a self-similar group $G$ acting on $X^*$ is said to be finite-state if for any $g \in G$ the set $\{g|_v : v\in X^*\}$ is finite. The word problem is solvable for finitely generated finite-state self-similar groups, but the algorithm runs in exponential time (for instance, see \cite{Steinberg2015}).  For contracting groups there is a polynomial-time algorithm \cite{savchuk2003word}. Although $G_d$ is non-contracting, we now present an efficient algorithm for the group $G_d$, when $d$ is odd, which is inspired by the algorithm given in \cite{grigorchuk2002torsion} for a contracting group. 

\begin{theorem}
	For odd values of $d$, the word problem for $G_d$ is solvable via an efficient algorithm.
\end{theorem}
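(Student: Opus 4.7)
My approach is a recursive algorithm driven by \tref{exponent_sum_d} for aggressive pruning. Given a word $w$ of length $n$ over $\tilde{A}$, I would proceed as follows: (1)~compute the exponent-sum vector $(|w|_{a_1}, \ldots, |w|_{a_d})$ in $O(n)$ time by a single scan of $w$; if this vector is nonzero, \tref{exponent_sum_d} lets us report at once that $w$ does not represent the identity; (2)~compute the first-level permutation $\lambda_w \in S_d$ by composing the transpositions $(i\ \overline{i+1})$ contributed by each occurrence of $a_i^{\pm 1}$ in $w$, and if $\lambda_w$ is nontrivial, report $w \ne e$; (3)~compute the sections $w|_1, \ldots, w|_d$ in one pass via the wreath recursions of the generators, producing words of total length $2n$ (by the reasoning of \rref{w_len_twice} extended to $\tilde{A}$); (4)~recursively apply the algorithm to each section, accepting iff every recursive call accepts.

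Correctness is immediate from the fact that $w = e$ in $G_d$ iff $\lambda_w = e$ and every section $w|_k$ is the identity, and the soundness of the pruning step (1) is exactly \tref{exponent_sum_d}. Each of the non-recursive steps (1)--(3) runs in time linear in the length of its input word, so the total cost is governed by the size of the recursion tree that survives pruning.

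The main obstacle is to argue that this recursion is indeed efficient. Since the total section length doubles with each recursion level, a blind traversal of the recursion tree has work $\Omega(2^k n)$ at depth~$k$. To control this blow-up I would exploit two features simultaneously. First, \tref{exponent_sum_d} applies at every depth: if $w$ represents the identity, then every section $w|_u$ at every vertex $u$ of $T$ must have zero exponent-sum vector, so almost all branches of the naive recursion are cut off at Step~(1). Second, since the sections of each generator lie in $A \cup \{e\}$, every section word encountered is a reduced expression in finitely many letters, permitting memoization so that identical sections are processed only once. The principal technical hurdle will be turning this heuristic into a quantitative bound --- for instance, by exhibiting a suitable potential function (combining word length with a norm on the exponent-sum vector) that strictly decreases along the recursion whenever $w$ represents the identity, or by directly bounding the number of distinct surviving section words.
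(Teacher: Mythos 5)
Your recursive skeleton (check $\lambda_w$, check exponent sums via \tref{exponent_sum_d}, recurse on sections) is the same as the paper's procedure, but the proposal has a genuine gap exactly where you flag the ``principal technical hurdle'': you never establish that the recursion terminates, and this is not merely an efficiency issue. A first-level section of a word of length $n$ over $\tilde{A}$ can again have length $n$ --- for instance $(a_1a_2\cdots a_d)|_1 = a_1a_2\cdots a_d$ --- so the depth of your recursion tree is a priori unbounded. The pruning tests cannot rescue this on the inputs you must \emph{accept}: if $w$ does represent the identity, then every section at every vertex also represents the identity, so neither test ever fires and your algorithm as stated may simply fail to halt. Memoization does not obviously help either, since the set of reduced words that could occur as sections is infinite; and no potential function is exhibited.

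The paper closes this hole with one extra step that your proposal omits. After the two pruning tests survive, the word $u$ is nonempty with $|u|_p = 0$ for all $p \in A$, hence contains both a letter of $A$ and a letter of $A^{-1}$; reading $u$ cyclically, one can therefore conjugate $u$ by a cyclic shift (which does not change whether $u = e$, nor $|u|$) so that $u = u_0\, p^{-1}q$ with $p, q \in A$. A direct computation (Equation \ref{len_redu}) shows that every first-level section of $p^{-1}q$ has length at most $1$, while every section of $u_0$ has length at most $|u_0| = |u|-2$; hence $|u|_l| \le |u|-1$ for \emph{every} $l$. This strict length decrease along every branch bounds the recursion depth by $|w|$ and yields termination outright, with no need for memoization or a bespoke potential function. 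If you add this cyclic-conjugation step, your argument becomes the paper's proof; without it, the claimed algorithm is not even known to be a decision procedure.
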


\begin{proof} Consider the following procedure Id-Br($u$) for which a word $u$ over $\tilde{A}$ is an input.
\begin{center}
\begin{minipage}{12cm}
	\noindent Id-Br($u$) =
	\begin{enumerate}[(i)]
		\item If $u \in \text{St}_{G_d}(\widehat{1})$, then go to step (ii). Otherwise, $u \ne e$. 
		\item If $|u|_p = 0$, for all $p \in A$, then go to step (iii). Otherwise, $u \ne e$.
		\item If $u$ has a subword of the form $p^{-1}q$, for some $p, q \in A$, then go to step (v). Otherwise, go to step (iv).
		\item Consider a conjugate $u'$ of $u$ through a cyclic shift such that $u'$ has a subword of the form $p^{-1}q$, for some $p, q \in A$. Replace $u$ by $u'$ and go to step (v). 
		\item Compute the wreath recursion of $u$, say $(u_1, u_2, \ldots, u_d)$. 
	\end{enumerate}
\end{minipage}
\end{center}

Let $w (\ne \varepsilon)$ be a word over $\tilde{A}$. 
If the wreath recursion of $w$ is $(w_1, w_2, \ldots, w_d)\lambda_w$, then note that $w = e$ if and only if $w_i = e$, for all $i \in \{1, 2, \ldots, d\}$ and $\lambda_w$ is the identity permutation. Give input $w$ to the procedure Id-Br($\;\;$) and iteratively call the procedure on each branch (obtained in step 5) if it is not empty. In any iteration, if the procedure returns that the input is not $e$, then the given word $w \ne e$. Otherwise, after a finite number of iterations the process results that $w = e$. The process converges because we have the length reduction of each branch. Indeed, for $p, q \in A$, we have $|p^{-1}q|_i| < |p^{-1}q|$, for all $i \in \{1, 2, \ldots, d\}$ (cf. Equation \ref{len_redu}). 
\end{proof}

\begin{theorem}\label{g_d_sp_tran}
	For odd values of $d$, the group $G_{d}$ is fractal and spherically transitive.
\end{theorem}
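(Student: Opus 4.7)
The plan is to invoke \pref{f.level}, which reduces the theorem to (i) transitivity of $G_d$ on the first level and (ii) $\pi_1(\text{St}_{G_d}(1)) = G_d$. Assertion (i) is immediate because $a_1, \ldots, a_d$ act on the first level as the transpositions $(1\ 2), (2\ 3), \ldots, (d-1\ d), (d\ 1)$, which generate $S_d$.

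For (ii) I plan to exhibit elements of $\text{St}_{G_d}(1)$ whose sections at $1$ generate $G_d$. Two families will suffice. The first is $B := a_1 a_2 \cdots a_d$: the generators successively carry $1 \to 2 \to \cdots \to d \to 1$, so $B \in \text{St}_{G_d}(1)$, and the wreath-recursion formula $(gh)|_u = g|_u h|_{g(u)}$ yields $B|_1 = B$ (this is essentially already checked in the proof of \tref{th_non-con}). The second family is, for $1 \le k \le d-1$,
\[
E_k \;:=\; (a_1 \cdots a_{k-1})\, a_k^2\, (a_1 \cdots a_{k-1})^{-1},
\]
which fixes $1$ because $a_1 \cdots a_{k-1}$ sends $1$ to $k$ and $a_k^2$ fixes $k$. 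A direct section-at-$1$ computation, using $a_k^2|_k = a_k a_{k+1}$ and $(a_1 \cdots a_{k-1})|_1 = a_1 \cdots a_{k-1}$, gives
\[
A_k \;:=\; E_k|_1 \;=\; (a_1 \cdots a_{k-1})(a_k a_{k+1})(a_1 \cdots a_{k-1})^{-1},
\]
so $B$ and all $A_k$ lie in $\pi_1(\text{St}_{G_d}(1))$.

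Next I would isolate $a_1$ via a descending induction whose termination at a single generator is precisely what the oddness of $d$ provides. Set $\rho_0 := B$ and $\rho_n := A_{d-2n+1}^{-1}\, \rho_{n-1}$ for $n \ge 1$. Expanding $A_{m-1}^{-1}$ and cancelling yields the telescoping identity
\begin{align*}
A_{m-1}^{-1}(a_1 \cdots a_m) &= (a_1 \cdots a_{m-2})(a_m^{-1} a_{m-1}^{-1})(a_1 \cdots a_{m-2})^{-1}(a_1 \cdots a_m) \\
&= (a_1 \cdots a_{m-2})(a_m^{-1} a_{m-1}^{-1})(a_{m-1} a_m) \\
&= a_1 \cdots a_{m-2},
\end{align*}
which, applied with $m = d - 2(n-1)$, shows inductively that $\rho_n = a_1 \cdots a_{d-2n}$. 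Taking $n = (d-1)/2$, an integer precisely because $d$ is odd, produces $\rho_{(d-1)/2} = a_1 \in \pi_1(\text{St}_{G_d}(1))$.

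With $a_1$ in hand I plan an ascending induction: assuming $a_1, \ldots, a_k \in \pi_1(\text{St}_{G_d}(1))$, the product $a_1 \cdots a_{k-1}$ is also in this subgroup, and hence
\[
(a_1 \cdots a_{k-1})^{-1}\, A_k\, (a_1 \cdots a_{k-1}) \;=\; a_k a_{k+1} \;\in\; \pi_1(\text{St}_{G_d}(1)),
\]
so $a_{k+1} = a_k^{-1}(a_k a_{k+1}) \in \pi_1(\text{St}_{G_d}(1))$. Iterating $k = 1, \ldots, d-1$ deposits every generator in $\pi_1(\text{St}_{G_d}(1))$, so $\pi_1(\text{St}_{G_d}(1)) = G_d$, and \pref{f.level} then delivers both fractalness and spherical transitivity. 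The delicate point is verifying the telescoping identity and making transparent that the length of $\rho_n$ drops by exactly two at each step, so that the chain reaches the single generator $a_1$ if and only if $d$ is odd; for even $d$ the analogous iteration terminates at $\rho_{d/2-1} = a_1 a_2$, and this is exactly where the parity hypothesis enters in an essential way.
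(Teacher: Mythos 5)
Your proof is correct and follows the same overall strategy as the paper: reduce to \pref{f.level}, then generate $\pi_1(\text{St}_{G_d}(1))$ using sections of conjugates of the squares $a_k^2$ (each contributing a conjugate of $a_ka_{k+1}$) together with a product of all $d$ generators, with the oddness of $d$ entering exactly where the generators get paired off. The execution differs in two worthwhile ways. First, the paper works inside the level stabilizer $\text{St}_{G_d}(\widehat{1})$ and therefore passes to $h^{d-1}$ for $h = a_2a_3\cdots a_da_1$, computing $\pi_1(h^{d-1}) = a_1a_da_{d-1}\cdots a_2$; you instead observe that $B = a_1\cdots a_d$ already fixes the vertex $1$ with $B|_1 = B$, which is all that \pref{f.level} requires, so the power computation disappears. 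Second, your isolation of $a_1$ is a telescoping cancellation $A_{m-1}^{-1}(a_1\cdots a_m) = a_1\cdots a_{m-2}$, dropping two letters per step, whereas the paper compares the partial products $\pi_1(s_2s_4\cdots s_{d-1}) = a_da_{d-1}\cdots a_2$ against $\pi_1(h^{d-1})$ and then uses explicit difference formulas for $a_{2i}$ and $a_{2i+1}$; both devices invoke the parity of $d$ at the same point, namely to ensure the pairing terminates at the single generator $a_1$ rather than at $a_1a_2$. Your ascending induction recovering $a_2,\ldots,a_d$ from $a_1$ and the $A_k$ is also correct and somewhat more direct than the paper's closing computation. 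In short: same skeleton, cleaner bookkeeping, and the shortcut of using the vertex stabilizer rather than the level stabilizer is legitimate since \pref{f.level} only asks for $\pi_x(\text{St}_G(x)) = G$ at a single first-level vertex.
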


\begin{proof}
	Consider the element  $g = a_1 a_2 \cdots a_{d-1} \in G_{d}$ and note that the action of $g$ on the first level of $T$ is given by the permutation $\lambda_{g} = (1\ 2)(2\ 3) \cdots (d-1\ d) = (d \ d-1 \dots 1)$. Therefore, for all $x, y \in X$, $\lambda_{g}^i(x) = y$ for some $i$. Hence, $G_{d}$  is transitive on the first level of $T$. In view of  Proposition \ref{f.level}, we show that $\pi_{1}(\textup{St}_{G_{d}}(1)) = G_{d}$ so that $G_{d}$ is fractal and spherically transitive. For this, in the following, we explicitly construct elements within the subset $\text{St}_{G_{d}}(\widehat{1})$ whose images under $\pi_{1}$ give the generators $a_i$'s of $G_{d}$. 
	
	For $2 \leq i \leq d-1$, let $s_i = (a_{i}^{2})^{g_{i-1}}$, where $g_{i-1} = a_{i-1} \cdots a_1$. For all $i$, clearly $s_i \in \text{St}_{G_{d}}(\widehat{1})$. Note that the section 
	$g_{i-1}|_i = a_{i-1}|_i a_{i-2}|_{i-1} a_{i-3}|_{i-2} \cdots a_1|_{2} = a_{i} \cdots a_2$
	and, since $g_{i-1}^{-1}(1) =  i$, we have $g_{i-1}^{-1}|_1 = (g_{i-1}|_i)^{-1} = {a_2}^{-1} \cdots {a_{i}}^{-1}$.
	Further, note that $\pi_1(s_2) = s_2|_1 = a_3a_2$ and, for $3 \leq i \leq d-1$,
	$\pi_1(s_i) = s_i|_1 = g_{i-1}^{-1}|_1 a_{i}^{2}|_i g_{i-1}|_i =
	({a_2}^{-1} \cdots {a_{i}}^{-1}) (a_{i}a_{i+1})(a_i \cdots a_2) = a^{-1}_{2} \cdots a^{-1}_{i-1}a_{i+1}\ \cdots a_{2}.$

	Let $h = a_2 a_{3} \cdots a_{d} a_1$. Note that $\lambda_h = (2\ 3)(3\ 4) \cdots (d\ 1)(1\ 2) = (d\ d-1\ \dots \break \dots\ 3 \ 1)$ and hence $h^{d-1} \in \text{St}_{G_{d}}(\widehat{1})$.  To calculate $\pi_{1}(h^{d-1})$, first observe that
		$h|_1 = a_2|_1 a_3|_1 \cdots a_{d}|_1 a_1|_{d} = a_1$, 
		$h|_3 = a_2|_3 a_3|_2 \cdots a_{d}|_2 a_1|_2 = a_3 a_2$, 
	and, for $4 \leq j \leq d$,
	$h|_j = a_2|_j \cdots a_{j-1}|_j a_{j}|_{j-1} \cdots a_{d}|_{j-1} a_1|_{j-1} = a_j$.
	Thus, 
	$\pi_{1}(h^{d-1}) = h|_{1} h|_{d} h|_{d-1} \cdots h|_{3} \break = a_{1} a_{d} a_{d-1} \cdots a_{3} a_{2}$.

	Now note that for $1\leq i \leq \frac{d-1}{2}$, $\pi_{1}(s_{2}s_{4} \cdots s_{2i}) = a_{2i+1}a_{2i} \cdots a_{2}$.
	Accordingly,
	$$\pi_{1}(h^{d-1}(s_{2}s_{4} \cdots s_{d-1})^{-1}) = a_1 \in \pi_{1}(\text{St}_{G_{d}}(\widehat{1})).$$ 
	Let $s_1 = (s_{2}s_{4} \cdots s_{d-1})h^{-(d-1)}a_{1}^{2}$. Then $\pi_{1}(s_1) = a_2$ as $\pi_{1}( a_{1}^{2}) = a_1a_2$.
	Further, for $1\leq i \leq \frac{d-1}{2}$,  $$\pi_{1}(s_{1}s_{3} \cdots s_{2i-1}) = a_{2i}a_{2i-1} \cdots a_{2}.$$ 	
	A direct computation gives
	\begin{eqnarray*}
		a_{2i} &=& \pi_{1}\left((s_{1} s_{3} \cdots s_{2i-1})(s_{2} \cdots s_{2i-2})^{-1}\right),\ 2 \leq i \leq \frac{d-1}{2}; \\
		a_{2i+1} &=& \pi_{1}\left((s_{2} s_{4} \cdots s_{2i})(s_{1} s_{3} \cdots s_{2i-1})^{-1}\right),\  1 \leq i \leq \frac{d-1}{2}.
	\end{eqnarray*} 
	Hence, for all $i \in \{1, \dots, d\}$, $a_i \in \pi_{1}(\text{St}_{G_{d}}(\widehat{1}))$. 
\end{proof}

We use the following notation in the proof of Theorem \ref{g_d_we_re_br} for presenting it in an elegant manner.  
\begin{notation}
	Let the wreath recursion of an element $g = (g_1, \ldots,g_d)\lambda_g$. If $g_i = e$ for most of the components, then $g$ is simply denoted by $\ldbr \underset{i_1}{g_{i_1}}, \ldots, \underset{i_n}{g_{i_n}}\rdbr \lambda_g$, where $g_{i_l}$ is non-identity element appearing at $i_l$-th component. For example, if $g = (a, e, e, b, e, e, c)\lambda_g$, then $g$ can be denoted by $\ldbr \underset{7}{c}, \underset{1}{a}, \underset{4}{b}\rdbr \lambda_g$, irrespective of the order in which the non-identity elements appear in wreath recursion of $g$, as their positions are given. 
\end{notation}

\begin{theorem}\label{g_d_we_re_br}
	For odd values of $d$, the group $G_{d}$ is weakly regular branch over its commutator subgroup $G_{d}^{'}$.
\end{theorem}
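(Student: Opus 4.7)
The goal is to establish $(G_d')^d \subseteq \psi(G_d' \cap \text{St}_{G_d}(\widehat{1}))$. The plan is to reduce first by an exponent-sum argument, and then to a single-coordinate normal-closure problem. First I would observe that if $g \in \text{St}_{G_d}(\widehat{1})$ has $\psi(g) = (k_1, \ldots, k_d) \in (G_d')^d$, then by Theorem~\ref{exponent_sum_d} the exponent sum $|k|_{a_j}$ is a well-defined integer invariant of each $k \in G_d$ and vanishes for $k \in G_d'$; hence $|k_1 \cdots k_d|_{a_j} = \sum_i |k_i|_{a_j} = 0$ for every $j$, and Remark~\ref{exp_com_len_d}(\ref{odd_zero}) forces $|g|_{a_j} = 0$ for all $j$, so $g \in G_d'$. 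Consequently it suffices to prove the weaker containment $(G_d')^d \subseteq \psi(\text{St}_{G_d}(\widehat{1}))$, since any realizer produced this way will automatically lie in $G_d' \cap \text{St}_{G_d}(\widehat{1})$.

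Writing an arbitrary tuple in $(G_d')^d$ as a product of single-coordinate tuples and using spherical transitivity (Theorem~\ref{g_d_sp_tran}) to conjugate the single-coordinate support from position $i$ to position $1$, the remaining task becomes showing $L \supseteq G_d'$, where $L := \pi_1(\text{Rist}_{G_d}(1))$. Fractality furnishes a lift $\tilde{h} \in \text{St}_{G_d}(\widehat{1})$ of any $h \in G_d$; for any $g \in \text{Rist}_{G_d}(1)$ realizing $k \in L$, the conjugate $\tilde{h}^{-1}g\tilde{h}$ stays in $\text{Rist}_{G_d}(1)$ (since $\tilde{h} \in \text{St}_{G_d}(\widehat{1})$ gives $\tilde{h}^{-1}|_v = (\tilde{h}|_v)^{-1}$ at level $1$, so its sections at $v \ne 1$ become $g|_v^{\tilde{h}|_v} = e$) and has $\pi_1$-image $k^h$; hence $L$ is normal in $G_d$. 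Since $G_d'$ is normally generated in $G_d$ by $\{[a_i, a_j] : 1 \le i < j \le d\}$, it suffices to place each $[a_i, a_j]$ inside $L$.

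The key computation is that $\psi(a_i^2) = \ldbr \underset{i}{a_i a_{\overline{i+1}}}, \underset{\overline{i+1}}{a_{\overline{i+1}} a_i} \rdbr$ for every $i$, so the wreath-recursion supports of $a_{\overline{i-1}}^2$ and $a_i^2$ overlap only at position $i$; a direct component-wise calculation then gives $[a_{\overline{i-1}}^2, a_i^2] \in \text{Rist}_{G_d}(i)$ with $\pi_i$-image $[a_i a_{\overline{i-1}}, a_i a_{\overline{i+1}}]$, and transporting to position $1$ via the conjugation argument places each $[a_i a_{\overline{i-1}}, a_i a_{\overline{i+1}}]$ inside $L$. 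The main obstacle is extracting the individual commutators $[a_i, a_j]$: expanding modulo $[G_d, G_d']$ via bilinearity of commutators gives $[a_i a_{\overline{i-1}}, a_i a_{\overline{i+1}}] \equiv [a_i, a_{\overline{i+1}}][a_{\overline{i-1}}, a_i][a_{\overline{i-1}}, a_{\overline{i+1}}]$, and these $d$ relations need not on their own span $G_d'/[G_d, G_d']$ (for example for $d=3$ the resulting integer matrix has nontrivial cokernel). To close the gap I would enrich $L$ by further commutators $[u, v]$ with $u, v \in \text{St}_{G_d}(\widehat{1})$ whose wreath-recursion supports meet only at position $1$ (for instance $u|_k = e$ for $k \ge 3$ and $v|_k = e$ for $2 \le k \le d-1$); any such $[u, v]$ automatically lies in $\text{Rist}_{G_d}(1)$ with $\pi_1$-image $[u|_1, v|_1]$, and combined with the normality of $L$ these additional elements should supply enough extra commutators to cover every $[a_i, a_j]$ and hence all of $G_d'$.
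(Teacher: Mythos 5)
Your two preliminary reductions are correct and are genuinely different from (and in some ways cleaner than) what the paper does. The observation that any $g\in \text{St}_{G_d}(\widehat{1})$ with $\psi(g)\in G_d'\times\cdots\times G_d'$ automatically lies in $G_d'$ --- via the well-definedness of exponent sums from \tref{exponent_sum_d}, \rref{exp_com_len_d}(\ref{odd_zero}), and the description of $G_d/G_d'$ in \tref{quotient} --- legitimately removes the need to check that the realizers are commutators; and the reduction to showing $G_d'\le L:=\pi_1(\textup{Rist}_{G_d}(1))$, together with the fractality argument that $L$ is normal in $G_d$, is sound. The paper does neither of these: it directly manufactures explicit elements of $\text{St}_{G_d}(\widehat{1})\cap G_d'$.

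The proof is nevertheless incomplete, and the missing step is the one carrying all the content. The only elements you actually place in $L$ are $[a_ia_{\overline{i-1}},\,a_ia_{\overline{i+1}}]$ for $1\le i\le d$, and, as your own cokernel computation indicates (for $d=3$ the relevant integer matrix has determinant $\pm 4$), their images in $G_d'/[G_d,G_d']$ do not generate that quotient; hence the normal closure of these elements is a \emph{proper} subgroup of $G_d'$ and the argument cannot close as written. The proposed repair --- enriching $L$ by further commutators $[u,v]$ with $u,v\in\text{St}_{G_d}(\widehat{1})$ whose supports meet only at one coordinate --- is the right kind of move, but no such $u,v$ are exhibited and no verification is given that the resulting $\pi_1$-images, combined with normality of $L$, yield every $[a_i,a_{\overline{i+1}}]$; ``should supply enough extra commutators'' is a conjecture, not a proof. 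For comparison, the paper resolves exactly this difficulty by first constructing \emph{rooted} elements $\xi_i=[a_{\overline{i+1}}^{2},a_{\overline{i+2}}](\beta_i\beta_{\overline{i+1}})^{-1}\in G_d'$ with trivial sections and nontrivial first-level permutation; conjugating by these permutes coordinates at will, and the product $[a_i^{2},a_{\overline{i+1}}]^{g_{\overline{i+1}}}\bigl([a_i^{2},a_{\overline{i+1}}]^{a_i}\bigr)^{\xi_i^{-1}}$ is arranged so that the two two-point supports cancel in one coordinate and combine to exactly $[a_i,a_{\overline{i+1}}]$ in the other. Producing such rooted elements, or an equivalent device for isolating a single basic commutator in a single coordinate, is the step your proposal still needs.
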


\begin{proof}
	In view of Theorem \ref{g_d_sp_tran}, it is sufficient to prove $G_{d}^{'} \times \dots \times G_{d}^{'} \leq \psi_1 (\text{St}_{G_{d}}(\widehat{1})\ \cap\ G_{d}^{'})$. For this, we construct elements in $\text{St}_{G_{d}}(\widehat{1})\ \cap\ G_{d}^{'}$ whose images under $\psi_1$ give all the generators of $G_{d}^{'} \times \dots \times G_{d}^{'}$. 
	
	For $i , j \in \{1,2, \dots, d\}$, note that the commutators $[a_{i}, a_{j}] = e$, when\break $|i-j|\neq 1$ and $|i-j| \neq d-1$. Hence, the commutator group $G_{d}^{'}$ is generated by $[a_{1}, a_{2}], [a_{2}, a_{3}], \ldots, [a_{d-1}, a_{d}], [a_{d}, a_{1}]$ and their conjugates, written $[a_i, a_j]^g$, by any element $g  \in G_{d}$. 
	
	For $1 \le i \le d$, let $\beta_{i} = [a_{i}, a_{\overline{i+1}}]$. We compute the wreath recursion of the following elements in the succession:
	\begin{align*}
		\beta_{i} &= \ldbr \underset{i}{a_{\overline{i+1}}^{-1}}, \underset{\overline{i+1}}{a_{\overline{i+1}}}\rdbr(i\ \overline{i+1}\ \overline{i+2})\\
		\beta_{i} \beta_{\overline{i+1}} &= \ldbr\underset{i}{a_{\overline{i+1}}^{-1}a_{\overline{i+2}}^{-1}}, \underset{\overline{i+1}}{a_{\overline{i+1}}a_{\overline{i+2}}}\rdbr(i\ \overline{i+2})(\overline{i+1}\ \overline{i+3})\\
		[a_{i}^{2}, a_{\overline{i+1}}] &= \ldbr\underset{\overline{i+1}}{a_{i}^{-1} a_{\overline{i+1}}^{-1}}, \underset{\overline{i+2}}{a_{i}a_{\overline{i+1}}}\rdbr\\
		[a_{i}^{2}, a_{\overline{i+1}}]^{a_{i}} &= \ldbr\underset{i}{a_{\overline{i+1}}^{-1}a_{i}^{-1}}, \underset{\overline{i+2}}{a_{i}a_{\overline{i+1}}}\rdbr 
	\end{align*}  
    
	For $1 \leq i \leq d$, consider $\xi_i = [a_{\overline{i+1}}^{2}, a_{\overline{i+2}}](\beta_{i}\beta_{\overline{i+1}})^{-1} \in G_{d}$. Note that the wreath recursion of $\xi_i$ is given by 
	\begin{equation}\label{h_i}
		\xi_{i} = (e, \dots, e)(i\ \overline{i+2})(\overline{i+1}\ \overline{i+3}).
	\end{equation}

	In case of the group $G_3$, let  $g_i = \xi_i$ (for $1\leq i\leq 3$). In case of $G_{d}$ (for $d \geq 5$), let $g_i = (\xi_{\overline{i+1}}\xi_{i})(\xi_{\overline{i+3}}\xi_{\overline{i+2}})\cdots (\xi_{\overline{i+(d-4)}}\xi_{\overline{i+(d-5)}})\xi_{\overline{i+(d-3)}}$ (for $1\leq i\leq d$). For $1\leq j\leq d$, since $\xi_{\overline{j+1}}\xi_{j}$ sends $j$ to $\overline{j+2}$ and fixes $\overline{j+1}$, we have $g_i(i) = \overline{i-1}$ and $g_i(\overline{i+1}) = \overline{i+1}$, for any $1 \le i \le  d$.
	
	Note that, for $1 \leq i \leq d$,
	\begin{align*}
		([a_{i}^{2}, a_{\overline{i+1}}]^{a_{i}})^{\xi_{i}^{-1}} &= \ldbr\underset{i}{a_{i}a_{\overline{i+1}}}, \underset{\overline{i+2}}{a_{\overline{i+1}}^{-1}a_{i}^{-1}}\rdbr\\
		[a_{i}^{2}, a_{\overline{i+1}}]^{g_{\overline{i+1}}} &= \ldbr\underset{i}{a_{i}^{-1} a_{\overline{i+1}}^{-1}}, \underset{\overline{i+2}}{a_{i}a_{\overline{i+1}}}\rdbr
	\end{align*}
	Hence,
	$[a_{i}^{2}, a_{\overline{i+1}}]^{g_{\overline{i+1}}}([a_{i}^{2}, a_{\overline{i+1}}]^{a_{i}})^{\xi_i^{-1}} = \ldbr\underset{i}{[a_{i}, a_{\overline{i+1}}]}\rdbr,\ \textup{for} \ 1 \leq i \leq d.$ 	
	Further, in the wreath recursion of $[a_{i}^{2}, a_{\overline{i+1}}]^{g_{\overline{i+1}}}([a_{i}^{2}, a_{\overline{i+1}}]^{a_{i}})^{\xi_i^{-1}}$, we can move the generators $[a_{i}, a_{i+1}]$ to any $j$-th position  by taking appropriate conjugates by a product of $\xi_{i}$'s. Clearly, these elements are in $\text{St}_{G_{d}}(\widehat{1})\ \cap\ G_{d}'$. 
	
	Thus, the images of the above-produced elements under $\psi_1$ are the generators of $G_{d}' \times \cdots \times G_{d}'$ of the form $(e, \ldots, e, \underset{j}{[a_i, a_{\overline{i+1}}]}, e, \ldots, e)$ having commutators at any $j$-th component. Since $G_{d}$ is fractal, we also get the generators of the form $(e, \ldots, e, \underset{j}{[a_i, a_{\overline{i+1}}]^g}, e, \ldots, e)$ having conjugates of the commutators at any $j$-th component. 
	Hence, 
	for any $(\alpha_{1}, \alpha_{2}, \dots, \alpha_{d}) \in G_{d}' \times \dots \times G_{d}'$, there exists \break $\alpha \in \text{St}_{G_{d}}(\widehat{1})\cap G_{d}'$ such that $\psi_1(\alpha) = (\alpha_{1}, \alpha_{2}, \dots, \alpha_{d})$. 
\end{proof}

In view of Theorem \ref{quotient}, we have the following corollary of Theorem \ref{g_d_we_re_br}.

\begin{corollary}
	 For odd values of $d$, the group $G_d$ is not a regular branch group over $G_d'$.
\end{corollary}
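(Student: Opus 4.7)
The plan is to observe that the corollary follows immediately by combining Theorem \ref{g_d_we_re_br} with Theorem \ref{quotient} against the definition of a regular branch group. Recall from the definition of regular branch group that, in order for $G_d$ to be regular branch over $G_d'$, two conditions must hold: $G_d$ must be weakly regular branch over $G_d'$, and $G_d'$ must be of finite index in $G_d$. The first condition is exactly the content of Theorem \ref{g_d_we_re_br}, so what remains is to show that the finite index condition fails.

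For this, I would invoke Theorem \ref{quotient}, which identifies the abelianization $G_d/G_d'$ with $\mathds{Z}^d$. Since $\mathds{Z}^d$ is an infinite group, we have $[G_d : G_d'] = \infty$. Consequently, $G_d'$ is not of finite index in $G_d$, and therefore $G_d$ cannot be a regular branch group over $G_d'$, completing the proof.

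There is no real obstacle: the corollary is a direct bookkeeping consequence of the two theorems already established in this section, and the proof is essentially one line. The only thing to be careful about is to phrase it in a way that makes clear which defining property of regular branch groups fails, and to explicitly cite both Theorem \ref{g_d_we_re_br} (for the weakly regular branch structure) and Theorem \ref{quotient} (for the infinite index of the commutator subgroup).
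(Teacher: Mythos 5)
Your proposal is correct and matches the paper's (implicit) argument exactly: the paper derives this corollary from Theorem \ref{quotient}, since $G_d/G_d' \cong \mathds{Z}^d$ is infinite, so $G_d'$ has infinite index and the finite-index requirement in the definition of a regular branch group fails. No differences to note.
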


Note that by Equation \ref{h_i}, in the group $G_3$, the order of $\xi_i$ (for $1\leq i\leq 3$) is $3$ and in the group $G_{d}$ ($d\ge 4$ is odd), the order of $\xi_i$ (for $1\leq i\leq d$) is 2. Further, since $G_d$ has infinite order elements (see Section \ref{gen_class}), we have the following remark. 

\begin{remark}
	For odd values of $d$, the group $G_d$ is neither torsion nor torsion-free.
\end{remark}

\section{A Branch Group}
\label{grp_g3}

In this section, we consider $d = 3$ and show that the group $G_d$ is a branch group. This provides an explicit example in the class of non-contracting weakly regular branch groups established in Section \ref{odd_class}.    

Let $A = \{a, b, c\}$ and consider the group $G_3$ generated by $A$. Recall that the wreath recursions of $a, b$ and $c$ are as per the following:
\begin{eqnarray*}
	a &=& (a, b, e)(1\ 2)\\
	b &=& (e, b, c)(2\ 3)\\
	c &=& (a, e, c)(3\ 1)
\end{eqnarray*}

We rephrase Remark \ref{exp_com_len_d}(\ref{exp_com_len_odd}) for $G_3$ and present in the following remark. 
\begin{remark}\label{exp_com_len}
	For a word $w$ over $\tilde{A}$, let the wreath recursion of $w$ be $(w_1, w_2, w_3)\lambda_w$. If $|w_1w_2w_3|_a = t_1$, $|w_1w_2w_3|_b = t_2$ and $|w_1w_2w_3|_c = t_3$, then $|w|_a = \frac{t_1 + t_2 - t_3}{2},\ |w|_b = \frac{t_2 + t_3 - t_1}{2}$ and $|w|_c = \frac{t_3 + t_1 - t_2}{2}$. 
\end{remark}

\begin{remark}
	In the group $G_3$, the elements $\xi_i$, for $1 \le i \le 3$, given in Equation \ref{h_i} coincide and are equal to $(e, e, e)(1\ 3\ 2)$. We denote the element by $\xi$. 
\end{remark}

\begin{remark}\label{len_even_st}
	For $w \in G_3$ with $|w| = 3$, note that $\lambda_w = \lambda_p$ for some $p \in A$. Accordingly, if $|w|$ is odd, then $\lambda_w$ is not identity so that $w \not\in \text{St}_{G_3}(\widehat{1})$. Thus, if $w \in \text{St}_{G_3}(\widehat{1})$, then $|w|$ is even and hence $|w|_A$ is even. 
\end{remark}

\begin{lemma}\label{sec_non_iden}
	Let $p_1, p_2, q_1, q_2 \in A$ with $p_2 \ne q_2$. For each permutation $\sigma$ on $\{1, 2, 3\}$, there exists $j \in \{1, 2, 3\}$ such that the non-identity (i.e., $\ne e$) sections $p_1p_2|_j$ and $q_1q_2|_{\sigma(j)}$ are not identical and they are not of the form $pq$ and $q$, for some $p, q \in A$.   
\end{lemma}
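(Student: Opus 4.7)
The approach is a direct case analysis on the nine length-$2$ words over $A$. First I would tabulate the wreath recursion $p_1p_2 = (p_1p_2|_1, p_1p_2|_2, p_1p_2|_3)\lambda_{p_1p_2}$ for each, which follows immediately from the formulas for $a_ia_{\overline{i\pm 1}}|_l$ already computed in Section \ref{gen_class}. Three structural observations then drive the argument: (a) each length-$2$ word has exactly one ``long'' section of length $2$, and that long section is always one of the six cyclically-adjacent products $ab, ba, bc, cb, ca, ac$; (b) the only length-$2$ words with an identity section are $aa, bb, cc$, whose $e$-sections sit at positions $3, 1, 2$ respectively, so identity sections are sparse; and (c) every remaining non-identity section is a single generator from $A$, with $a$ appearing only at position $1$, $b$ only at position $2$, and $c$ only at position $3$.

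Given $(w, v) = (p_1p_2, q_1q_2)$ with $p_2 \ne q_2$, I would form the $3 \times 3$ grid with entries $(w|_j, v|_k)$ and flag a cell $(j, k)$ as \emph{bad} whenever one of the three forbidden conditions holds: (i) one entry is $e$; (ii) the two entries coincide; or (iii) one entry is a length-$2$ word $pq$ and the other is the generator $q$. The lemma is then equivalent to the claim that, for every $\sigma \in S_3$, at least one of $(1, \sigma(1)), (2, \sigma(2)), (3, \sigma(3))$ is not bad --- equivalently, that the set of bad cells does not contain a system of distinct representatives of the three rows.

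The enumeration is made tractable by two symmetries. First, $G_3$ admits a cyclic automorphism given by $a \mapsto b \mapsto c \mapsto a$ paired with the vertex relabelling $1 \mapsto 2 \mapsto 3 \mapsto 1$ (a direct check on the defining wreath recursions). Second, the lemma is symmetric under $(w, v) \leftrightarrow (v, w)$, which replaces $\sigma$ by $\sigma^{-1}$. Under these, the $54$ ordered pairs with $p_2 \ne q_2$ collapse to a small number of orbit representatives (for instance $(aa, ab), (aa, bb), (aa, bc), (aa, cc), (ab, ba), (ab, ac)$, and a few more). For each representative, observations (a)--(c) force the bad cells to number at most five out of nine, and one then checks the six permutations of $S_3$ in turn to confirm that a good $j$ always exists. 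The only real obstacle is the bookkeeping of these cases; conceptually, observations (a)--(c) guarantee that the good cells are always spread sufficiently across rows and columns to meet every permutation pattern.
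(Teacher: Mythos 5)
Your proposal follows essentially the same route as the paper: both proofs come down to writing out the wreath recursions of all nine length-two words over $A$ and verifying the claim exhaustively from that table. The paper organizes the verification into three cases according to whether $p_1=p_2$ and whether $q_1=q_2$, whereas you organize it as a ``bad cell'' argument on a $3\times 3$ grid together with a reduction by the cyclic symmetry $a\mapsto b\mapsto c\mapsto a$, $1\mapsto 2\mapsto 3\mapsto 1$ (which is indeed a legitimate automorphism of the setup, and which the paper does not exploit); this is a reasonable and somewhat cleaner bookkeeping device. Two points need attention, however. First, your observation (a) is false: the squares $a^2=(ab,ba,e)$, $b^2=(e,bc,cb)$ and $c^2=(ac,e,ca)$ each have \emph{two} sections of length two and one trivial section, not exactly one long section. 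Indeed (a) contradicts your own observation (b) combined with \rref{w_len_twice}, since the section lengths of a length-two word must sum to $4$; the correct dichotomy is that a word $pq$ with $p\ne q$ has section-length pattern $(2,1,1)$ with no trivial section, while a square has pattern $(2,2,0)$. This matters because you invoke (a)--(c) to bound the number of bad cells, and the pairs involving a square are precisely those in which an entire row or column of the grid becomes bad. Second, as you implicitly concede, ``at most five bad cells'' does not by itself rule out a bad system of distinct representatives (five cells can easily contain a permutation matrix), so the case-by-case check over the orbit representatives and the six permutations is not optional; once (a) is corrected and that check is actually carried out, the argument goes through and reaches the same conclusion as the paper's.
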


\begin{proof}
	By considering the possibilities of two length words, we prove the result using brute force method. Consider the wreath recursions of all two length words over $A$ as per the following, where $\lambda_1 = (1\ 3\ 2), \lambda_2 = (1\ 2\ 3)$. 
	\[\begin{array}{ccccc}
			ab = (ab, b, c)\lambda_1  &\quad& ba = (a, b, cb)\lambda_2 &\quad& a^2 = (ab, ba, e)\\
			bc = (a, bc, c)\lambda_1  && cb = (ac, b, c)\lambda_2 && b^2 = (e, bc, cb)\\
			ca = (a, b, ca)\lambda_1  && ac = (a, ba, c)\lambda_2 && c^2 = (ac, e, ca)
	\end{array}\]
	We discuss the result in the following three cases. 
	
	Case-I: $p_1 \ne p_2$ and $q_1 \ne q_2$. It is evident from the list that  two among the sections $p_1p_2|_i$ ($1 \le i \le 3$) end with one letter and the third section ends with a different letter. Accordingly, there exists $j$ such that the section $p_1p_2|_j$ is not identical with the section $q_1q_2|_{\sigma(j)}$. 
	
	Case-II: $p_1 = p_2$ and $q_1 = q_2$. Let us consider $a^2$ and $b^2$. Note that $a^2|_1 = ab$ is not identical with the sections $bc$ and $cb$ of $b^2$. In case $\sigma(1) = 1$, then clearly $a^2|_2 = ba$ is not identical with the sections $bc$ and $cb$ of $b^2$. Similarly, for all $p, q \in A$, it can be observed that there exists $j$ such that the section $p^2|_j$ is not identical with the section $q^2|_{\sigma(j)}$.
	
	Case-III: $p_1 = p_2 = p$ and $q_1 \ne q_2$. Note that two among the sections $p^2|_i$ ($1 \le i \le 3$) are of the form $pq$ and $qp$, for $q \in A$. If, for some $i$,  $q_1q_2|_i$ ends with $p$ (or $q$), then other two sections of $q_1q_2$ never end with $q$ (or $p$, respectively). Hence, there exists $j$ such that the section $p^2|_j$ is not identical with the section $q_1q_2|_{\sigma(j)}$.
	
	In all three cases, it is evident that the resultant sections are not of the form $p'q'$ and $q'$, for some $p', q' \in A$.      	 
\end{proof}

\begin{theorem}
The semigroup $S$ generated by $A$ is free of rank $3$.
\end{theorem}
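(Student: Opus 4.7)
The plan is to show that the natural semigroup homomorphism $\iota \colon A^+ \to G_3$ is injective, which is equivalent to $S$ being freely generated by $A$. If $\iota(w_1) = \iota(w_2)$ for positive words $w_1, w_2$, then $w_1 w_2^{-1}$ represents the identity in $G_3$, so Theorem \ref{exponent_sum_d} yields $|w_1|_p = |w_2|_p$ for every $p \in A$, hence $|w_1| = |w_2| =: n$. I argue by strong induction on $n$. The cases $n \leq 1$ are immediate (the three generators induce distinct transpositions on level $1$); the case $n = 2$ is settled by a direct inspection of the nine length-two wreath recursions tabulated in the proof of Lemma \ref{sec_non_iden}.

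For the inductive step ($n \geq 3$), iterated left- and right-cancellation in $G_3$ removes the longest common prefix and suffix of $w_1$ and $w_2$; if either is non-empty, the shorter words are handled by the inductive hypothesis. So I may assume $w_1 = x_1 \cdots x_n$ and $w_2 = y_1 \cdots y_n$ with $x_1 \neq y_1$ and $x_n \neq y_n$. Write $u_1 := x_1 \cdots x_{n-2}$, $u_2 := y_1 \cdots y_{n-2}$, $(p_1, p_2) := (x_{n-1}, x_n)$, and $(q_1, q_2) := (y_{n-1}, y_n)$, so $p_2 \neq q_2$. The wreath recursion gives
\begin{equation*}
	w_1|_j = u_1|_j \cdot (p_1 p_2)|_{u_1(j)}, \qquad w_2|_j = u_2|_j \cdot (q_1 q_2)|_{u_2(j)}, \quad j \in \{1,2,3\},
\end{equation*}
with $w_1|_j = w_2|_j$ in $G_3$ for every $j$.

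Applying Lemma \ref{sec_non_iden} to $(p_1 p_2, q_1 q_2)$ with the permutation $\sigma := \lambda_{u_2} \circ \lambda_{u_1}^{-1}$ (so that $u_2(j) = \sigma(u_1(j))$) furnishes an index $j^*$ such that, setting $k := u_1(j^*)$, the sections $(p_1 p_2)|_k$ and $(q_1 q_2)|_{\sigma(k)}$ are non-identity, distinct as words, and not of the form $pq$ and $q$. A length bookkeeping based on Remark \ref{w_len_twice} ($\sum_j |w_1|_j| = 2n$) lets me arrange $|w_1|_{j^*}| < n$, so the inductive hypothesis forces $w_1|_{j^*} = w_2|_{j^*}$ as words. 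Aligning the trailing segments of $u_1|_{j^*} \cdot (p_1 p_2)|_k$ and $u_2|_{j^*} \cdot (q_1 q_2)|_{\sigma(k)}$ as identical strings then produces the contradiction: equal-length tails must coincide (contradicting distinctness), while unequal-length tails (necessarily of sizes $1$ and $2$) can only match when they have the form $pq$ and $q$ (contradicting the other exclusion clause of Lemma \ref{sec_non_iden}). The main obstacle is verifying the length bound $|w_1|_{j^*}| < n$ at the specific index supplied by Lemma \ref{sec_non_iden}, and this is precisely why that lemma carries the atypical exclusion of the $pq$-and-$q$ pattern.
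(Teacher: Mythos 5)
Your route is essentially the paper's: reduce by cancellation and Theorem \ref{exponent_sum_d} to equal-length positive words with distinct first and last letters, apply Lemma \ref{sec_non_iden} to the trailing two-letter blocks, and contradict by aligning suffixes. The suffix-alignment step is correct, and the $pq$-versus-$q$ exclusion in Lemma \ref{sec_non_iden} is indeed exactly what makes it work. The genuine gap is the step you yourself flag: the bound $|w_1|_{j^*}|<n$ at the particular index $j^*$ supplied by Lemma \ref{sec_non_iden} does not follow from $\sum_j |w_1|_j|=2n$, and it can fail. A section of a positive word of length $n$ can itself have length $n$: for instance $a^{2m}=((ab)^m,(ba)^m,e)$, so two of the three sections of $a^n$ have length $n$, and nothing ties the index chosen by Lemma \ref{sec_non_iden} (which depends only on the last two letters) to a short section. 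Your closing claim that the $pq$-and-$q$ exclusion is ``precisely why'' the length bound holds is a misattribution: that clause serves the suffix alignment and says nothing about lengths.

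The missing ingredient is the first-letter analysis, which your inductive step never uses even though you arranged $x_1\neq y_1$. For distinct generators $p_0\neq q_0$ the three level-one indices split into one index $i_1$ at which $p_0|_{i_1}=q_0|_{i_1}$ is a common generator $p$, and two indices $i_2,i_3$ at which one of $p_0,q_0$ has trivial section. At $i_2$ or $i_3$ one of the two sections has length at most $n-1$; since the two sections are equal in $G_3$, Theorem \ref{exponent_sum_d} forces them to have equal length, so the inductive hypothesis applies and they are identical words, contradicting the suffix alignment when $j^*$ lands there. At $i_1$ both sections begin with $p$; cancelling this common first letter drops the length below $n$ and yields the same contradiction. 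Running your argument with this three-way case distinction on $j^*$ (equivalently, inducting on $|w_1|+|w_2|$, as the paper does with its minimal counterexample) closes the gap; as written, the inductive step is incomplete.
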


\begin{proof}
	By Theorem \ref{freerelator_d}, if $w$ is a word over $A$, then $w$ cannot be a relator in $G_3$ and hence in $S$. Also, by Theorem \ref{exponent_sum_d}, if $u, v$ are words over $A$ such that $|u|_p \ne |v|_p$ for some $p \in A$, then $uv^{-1} \ne e$ in $G_3$ and hence $u = v$ is not a relation in $S$. 
	
	Accordingly, it is sufficient to show that the group $G_3$ does not have any relation of the form $u = v$ for nonempty and non-identical words $u$ and $v$ over $A$ such that $|u|_p = |v|_p$, for all $p \in A$. On the contrary, suppose such relations exist in $G_3$ and $u' = v'$ is such a relation with $|u'| + |v'|$ is minimum. 
	
	Observe that $|u'| = |v'| \geq 3$ and $u'|_i = v'|_i$, for all $i \in\{1, 2, 3\}$. Assume that, $u'$ is of the form $p_0u_0p_1p_2$ and $v'$ is of the form $q_0v_0q_1q_2$, where $p_0, p_1, p_2, q_1, q_2, q_3 \in A$ and $u_0, v_0$ are words over $A$. Note that $p_0 \ne q_0$; otherwise, $u_0p_1p_2 = v_0q_1q_2$ with $|u_0p_1p_2| + |v_0q_1q_2| < |u'| + |v'|$, which contradicts our assumption. Similarly, $p_2 \ne q_2$. Recall the wreath recursions of $a, b$ and $c$ and note that the sections $p_0|_{i_1} = q_0|_{i_1} = p$,  $p_0|_{i_2} = e$, $q_0|_{i_2} = q$, $p_0|_{i_3} = r$ and $q_0|_{i_3} = e$, for distinct $i_1, i_2, i_3 \in \{1, 2, 3\}$ and $p, q, r \in A$. Consequently, $u'|_{i_1} = pu_1$ and $v'|_{i_1} = pv_1$, for some words $u_1, v_1$ over $A$, $|u'|_{i_2}| + |v'|_{i_2}| < |u'| + |v'|$, and $|u'|_{i_3}| + |v'|_{i_3}| < |u'| + |v'|$.
    
    Note that $u'|_i = (p_0u_0)|_i(p_1p_2)|_{\sigma_1(i)}$, and $v'|_i = (q_0v_0)|_i(q_1q_2)|_{\sigma_2(i)}$, where $\sigma_1 = \lambda_{p_0u_0}$ and $\sigma_2 = \lambda_{q_0v_0}$. Since $p_2 \ne q_2$, by Lemma \ref{sec_non_iden}, there exists $j \in \{1, 2, 3\}$ such that the sections $p_1p_2|_{\sigma_1(j)}$ and $q_1q_2|_{\sigma_2(j)}$ are not identical and they are not of the form $p'q'$ and $q'$, for some $p', q' \in A$. Hence, $u'|_j$ and $v'|_j$ are not identical.
    
    In case $j = i_1$, since $u'|_j = v'|_j$, we have $u_1 = v_1$ and they are not identical with $|u_1| + |v_1| < |u'|_j| + |v'|_j| \le |u'| + |v'|$. If for some $p' \in A$, $|u_{1}|_{p'} \neq |v_{1}|_{p'}$, then we get a contradiction by Theorem \ref{exponent_sum_d}; else, it contradicts our assumption that $|u'|+|v'|$ is minimum.
    
    In case $j = i_2$ or $i_3$, we clearly have $u'|_j = v'|_j$ and they are not identical with $|u'|_{j}| + |v'|_{j}| < |u'| + |v'|$. If for some $p' \in A$, $|u|_{j}|_{p'} \neq |v|_{j}|_{p'}$, then we get a contradiction by Theorem \ref{exponent_sum_d}; else, it contradicts our assumption that $|u'|+|v'|$ is minimum. 
    
Hence, the semigroup $S$ generated by $A$ is free.
\end{proof}

Since the subsemigroup $S$ generated by $A$ in $G_3$ is free, we have the following corollary. 

\begin{corollary}
	The group $G_3$ is of exponential growth.
\end{corollary}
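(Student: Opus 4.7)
The corollary asserts that $G_3$ has exponential growth, and the plan is to deduce this directly from the freeness of the semigroup $S$ generated by $A$, which was established in the preceding theorem. Recall that the (word) growth function of a finitely generated group $G$ with respect to a finite symmetric generating set $X$ is $\gamma_{G,X}(n) = |\{g \in G : |g|_X \le n\}|$, and $G$ is said to be of exponential growth if $\gamma_{G,X}(n)$ grows exponentially in $n$; since any two finite generating sets yield equivalent growth functions, it suffices to work with $\tilde{A} = A \cup A^{-1}$.

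First, I will record the standard count for a free semigroup: if $S$ is freely generated by the three-element set $A$, then for each $n \ge 1$ there are exactly $3^n$ pairwise distinct elements of $S$ represented by words of semigroup length $n$, and these elements are pairwise distinct across different lengths as well. Consequently, the number of distinct elements of $S$ expressible by a word over $A$ of semigroup length at most $n$ is $\sum_{i=1}^{n} 3^i = \tfrac{3^{n+1} - 3}{2}$.

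Next, I will observe that the inclusion $S \hookrightarrow G_3$ preserves distinctness: two distinct elements of the free semigroup $S$ represent distinct elements of $G_3$. Moreover, any element of $S$ represented by a word over $A$ of length $i$ has group word length at most $i$ with respect to $\tilde{A}$. Combining these observations, we obtain
\[
\gamma_{G_3,\tilde{A}}(n) \;\ge\; \sum_{i=1}^{n} 3^i \;=\; \frac{3^{n+1} - 3}{2},
\]
so $\gamma_{G_3,\tilde{A}}(n)$ is bounded below by an exponential function of $n$, whence $G_3$ has exponential growth.

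There is essentially no obstacle: the entire argument is a routine lower bound, and all the substance is contained in the prior theorem establishing that $S$ is free of rank three. The only thing that needs care is making the (trivial) observation that semigroup length in $S$ bounds group length in $G_3$, so that a lower bound on $|S \cap \text{(semigroup ball of radius } n\text{)}|$ transfers to a lower bound on the group growth function.
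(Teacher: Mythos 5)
Your argument is correct and is exactly the reasoning the paper intends: the corollary is stated as an immediate consequence of the freeness of the semigroup $S$ generated by $A$, with the standard count of $3^n$ distinct positive words of length $n$ giving an exponential lower bound on the growth function. You have merely spelled out the routine details that the paper leaves implicit.
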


In $G_3$, for $k \ge 1$, consider the set $H_k = \{w \in G_3 \; :\; |w|_{A} \equiv 0\ (\text{mod}\ 2^{k+1})\}$. Clearly, $H_k$ is a normal subgroup of $G_3$. 

\begin{lemma}\label{ai}
	For every $g\in G_3$, there exists $h\in H_k$ and $i \in \mathds{N}$ with $0 \le i  < 2^{k+1}$ such that $g = a^ih$.
\end{lemma}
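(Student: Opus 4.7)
The plan is to reduce the lemma to the statement that $G_3/H_k$ is cyclic of order $2^{k+1}$ generated by the coset $aH_k$. First, I would observe that for any $g \in G_3$, the quantity $|g|_A := |g|_a + |g|_b + |g|_c$ is well-defined, independently of the word over $\tilde{A}$ chosen to represent $g$. This well-definedness is a direct consequence of Theorem \ref{exponent_sum_d}: if $w_1$ and $w_2$ are two words over $\tilde{A}$ representing the same element $g$, then $w_1 w_2^{-1}$ is a relator, hence $|w_1|_p = |w_2|_p$ for each $p \in A$, and consequently $|w_1|_A = |w_2|_A$.

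Next, I would define the map $\phi : G_3 \to \mathds{Z}/2^{k+1}\mathds{Z}$ by $\phi(g) = |g|_A \pmod{2^{k+1}}$. Since exponent sums are additive under concatenation of words, $\phi$ is a group homomorphism, and its kernel is precisely $H_k$. A direct computation gives $\phi(a) = 1$, so the image of $\phi$ contains a generator of $\mathds{Z}/2^{k+1}\mathds{Z}$, and hence $\phi$ is surjective. By the first isomorphism theorem, $G_3/H_k$ is cyclic of order $2^{k+1}$, generated by $aH_k$.

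To finish, given any $g \in G_3$, let $i$ denote the unique integer with $0 \le i < 2^{k+1}$ such that $\phi(g) \equiv i \pmod{2^{k+1}}$. Then $\phi(a^{-i} g) \equiv 0$, so $h := a^{-i} g \in H_k$, and $g = a^i h$ is the desired decomposition.

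I do not anticipate any substantive obstacle: the crucial input, Theorem \ref{exponent_sum_d}, has already been established for odd $d$ (in particular for $d = 3$), and the rest is a routine application of the first isomorphism theorem. The only point that requires any care is the well-definedness of $|g|_A$ as a function on group elements (as opposed to on words), which is precisely where Theorem \ref{exponent_sum_d} is invoked.
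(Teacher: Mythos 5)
Your proof is correct and rests on the same essential fact as the paper's: Theorem \ref{exponent_sum_d} makes the exponent sums, hence $|g|_A$, well-defined on elements of $G_3$, and since $|a|_A = 1$ every element is a power of $a$ modulo $H_k$. The paper reaches this by explicitly rewriting $g = a^{n_1}b^{n_2}c^{n_3}h_1$ with $h_1 \in G_3' \le H_k$ and trading the $b$'s and $c$'s for $a$'s modulo $H_k$, whereas you package the same computation as the first isomorphism theorem applied to the total-exponent-sum homomorphism $\phi\colon G_3 \to \mathds{Z}/2^{k+1}\mathds{Z}$; the substance is identical and your presentation is arguably cleaner.
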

\begin{proof}
	For $w \in G_3'$, since $|w|_p = 0$ for all $p \in A$, we have $G_3' \le H_k$. Accordingly, for any $g \in G_3$, we have
	\begin{eqnarray*}
		g &=& a^{n_{1}}b^{n_{2}}c^{n_{3}}h_1,\ \text{for some}\ h_1 \in G_3'\\
		&=& a^{n_{1}}b^{n_{2}}c^{n_{3}}(c^{-n_{3}}b^{-n_{2}}a^{n_2 + n_3})h_2,\ \text{where}\ h_2 = (c^{-n_{3}}b^{-n_{2}}a^{n_2 + n_3})^{-1}h_1 \in H_k\\
		&=& a^{(n_{1} + n_{2} + n_{3})}h_2\\
		&=& a^ih,\ \text{for some}\ h \in H_k\ \text{and}\ 0 \leq i < 2^{k+1}\ \text{with}\ i \equiv n_{1} + n_{2} + n_{3} \ (\text{mod}\ 2^{k+1}).
	\end{eqnarray*}
\end{proof}

\begin{theorem}
	For all $k \in \mathds{N}$, $\psi(\textup{Rist}_{G_3}(\widehat{k})) = \stackrel{3^k}{H_k \times \dots \times H_k}$.
\end{theorem}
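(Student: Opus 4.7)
The plan is to establish both inclusions, with the reverse one proved by induction on $k$. For $\psi(\textup{Rist}_{G_3}(\widehat{k})) \subseteq H_k \times \cdots \times H_k$, first decompose any $g \in \textup{Rist}_{G_3}(\widehat{k})$ as a commuting product of elements, one from each $\textup{Rist}_{G_3}(v)$ with $|v|=k$, since rigid stabilizers at distinct level-$k$ vertices commute by disjointness of supports. The $v$-th section of $g$ then equals the $v$-th section of the single factor in $\textup{Rist}_{G_3}(v)$, so it suffices to verify $g|_v \in H_k$ for such a factor. All level-$k$ sections of $g$ other than $g|_v$ vanish, so iterating \rref{exp_com_len_d}(\ref{exp_com_twice}) down $k$ levels yields $|g|_v|_A = 2^k |g|_A$. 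Because $g \in \textup{St}_{G_3}(\widehat{k}) \subseteq \textup{St}_{G_3}(\widehat{1})$, \rref{len_even_st} forces $|g|_A \in 2\mathds{Z}$, whence $|g|_v|_A \in 2^{k+1}\mathds{Z}$ and $g|_v \in H_k$.

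For the reverse inclusion, I would induct on $k$ via the stronger statement that $\pi_v(\textup{Rist}_{G_3}(v)) = H_k$ for every vertex $v$ at level $k$. For the inductive step, fix a level-$(k+1)$ vertex $v = ux$ with $|u|=k$ and $x \in X$. An element $g$ lies in $\textup{Rist}_{G_3}(v)$ iff $g \in \textup{Rist}_{G_3}(u)$ and $g|_u \in \textup{Rist}_{G_3}(x)$, where the latter is interpreted in $G_3$ via the identification $T_u \cong T$. Using the inductive hypothesis at $u$, this gives
\[
\pi_v(\textup{Rist}_{G_3}(v)) = \pi_x\bigl(H_k \cap \textup{Rist}_{G_3}(x)\bigr).
\]
For any $h \in \textup{Rist}_{G_3}(x)$, the one-level computation $|h|_x|_A = 2|h|_A$ shows $h \in H_k \iff h|_x \in H_{k+1}$, so $\pi_v(\textup{Rist}_{G_3}(v)) = \pi_x(\textup{Rist}_{G_3}(x)) \cap H_{k+1}$. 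Applying the base case $\pi_x(\textup{Rist}_{G_3}(x)) = H_1$ together with $H_{k+1} \subseteq H_1$ (valid since $4 \mid 2^{k+2}$ for $k \geq 1$) yields $\pi_v(\textup{Rist}_{G_3}(v)) = H_{k+1}$, and the direct-product structure on the $3^{k+1}$ level-$(k+1)$ vertices assembles these into $H_{k+1}^{3^{k+1}}$.

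The crux is the base case $k = 1$: to prove $H_1 \subseteq \pi_v(\textup{Rist}_{G_3}(v))$ for each $v \in \{1,2,3\}$. Conjugation by $\xi = (e,e,e)(1\,3\,2) \in G_3$ cyclically permutes the level-$1$ sections, sending $\textup{Rist}_{G_3}(v)$ to $\textup{Rist}_{G_3}(\xi(v))$, so the three subgroups $\pi_v(\textup{Rist}_{G_3}(v))$ agree with a common subgroup $V \leq G_3$, and it suffices to show $V = H_1$. \tref{g_d_we_re_br} provides $G_3' \leq V$, while Step 1 places $V \leq H_1$. Passing to the quotient by $G_3'$, \tref{quotient} identifies $G_3/G_3'$ with $\mathds{Z}^3$, and $H_1/G_3'$ with the index-$4$ sublattice $\{(i,j,k) : i+j+k \equiv 0 \pmod{4}\}$; the task is to exhibit elements of $V/G_3'$ generating this sublattice. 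I would do this by explicit wreath-recursion manipulation in the spirit of the proof of \tref{g_d_we_re_br}, combining the building block $[a^2,b] = \ldbr \underset{2}{a^{-1}b^{-1}}, \underset{3}{ab} \rdbr$ (and its cyclic analogues $[b^2,c], [c^2,a]$), the elements $a^4, b^4, c^4$ whose wreath recursions each already have one trivial level-$1$ component, and conjugations by $a, b, c$ and by $\xi$, then invoking the weakly regular branch property to absorb $G_3'$-valued corrections at the positions one wishes to trivialize.

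The main obstacle will be organizing this last step so that the cosets of $G_3'$ actually reached inside $V$ span the full index-$4$ sublattice $H_1/G_3'$, and not merely a proper sublattice such as the one generated by triples with all entries even (which is what the naive constructions from even powers of generators give). Overcoming this requires odd-component generators, most naturally produced by commutator-type constructions $[a^n,b]^{\xi}\cdot([a^n,b]^{a})^{\xi^{-1}}$ for carefully chosen $n$, analogous to the identity $[a^2,b]^{\xi}\cdot([a^2,b]^{a})^{\xi^{-1}} = \ldbr \underset{1}{[a,b]}\rdbr$ used in \tref{g_d_we_re_br}. Once the base case is in hand, the induction assembles the result and, as a downstream corollary, gives finite index of $\textup{Rist}_{G_3}(\widehat{k})$ in $G_3$, confirming that $G_3$ is a branch group.
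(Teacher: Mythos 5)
Your overall architecture matches the paper's: the forward inclusion via the doubling identity $|g|_v|_A = 2^k|g|_A$ combined with the parity constraint of \rref{len_even_st} is correct (and in fact slightly more direct than the paper's, which folds this direction into the induction), and your inductive step for the reverse inclusion — transporting the problem one level down and using $|h|_x|_A = 2|h|_A$ to trade $H_k$ for $H_{k+1}$ — is essentially the paper's argument. The problem is the base case $H_1 \subseteq \pi_1(\textup{Rist}_{G_3}(1))$, which you correctly identify as the crux but do not prove, and the route you sketch for closing it cannot succeed. Every element of the form $[a^n,b]^{\xi}\cdot([a^n,b]^{a})^{\xi^{-1}}$ is a product of conjugates of commutators, hence lies in $G_3'$ and maps to $0$ in $H_1/G_3' \cong \{(i,j,k)\in\mathds{Z}^3 : i+j+k\equiv 0 \pmod 4\}$. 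Such commutator-type building blocks can only reproduce the containment $G_3' \le V$ you already have from \tref{g_d_we_re_br}; they can never supply the odd-entry lattice generators your own analysis shows are required.

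What is missing is a pair of explicit non-commutator witnesses in $\textup{Rist}_{G_3}(1)$. The paper produces $c^{-1}b^{-1}ac\,\xi = (c^{-1}a, e, e)$ and $(a^{b}\xi^{2})^{2} = ((ab)^{2}, e, e)$, which place $c^{-1}a$ and $(ab)^2$ (hence $a^2b^2$ modulo $G_3'$) in $K := \pi_1(\textup{Rist}_{G_3}(1))$; the cyclic symmetry you invoke then gives $a^{-1}b, b^{-1}c \in K$ as well. In $\mathds{Z}^3$ these yield $(1,0,-1)$, $(-1,1,0)$, $(0,-1,1)$ and $(2,2,0)$, which generate a sublattice of determinant $4$ contained in the index-$4$ sublattice, hence all of $H_1/G_3'$. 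Without some such explicit elements your argument establishes only $G_3' \le V \le H_1$ and leaves the essential containment $H_1 \le V$ — and with it the whole theorem — open.
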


\begin{proof}
	We prove the statement by induction on $k$. 
	For $k = 1$, it is sufficient to show that $\psi(\text{Rist}_{G_3}(1)) = H_1 \times \{e\} \times \{e\} $ so that $\psi(\text{Rist}_{G_3}(\widehat{1})) = H_1 \times H_1 \times H_1$.
	
	Let $\psi(\text{Rist}_{G_3}(1)) = K \times \{e\} \times \{e\}$. Note that $K$ is a subgroup of $G_3$. We will show that $K = H_1$. Since $G_3$ is weakly regular branch group over $G_{3}'$, we have $G_{3}'\leq K$. Also, note that $c^{-1}a, (ab)^{2}\in K$ because $c^{-1}b^{-1}ac\xi = (c^{-1}a, e, e)$ and $(a^{b}\xi^{2})^{2} = ((ab)^{2}, e, e)$. Since $G_3' \le K$, clearly $a^2b^2 \in K$. By symmetry, we can observe that 
	$a^{-1}b, b^{-1}c, b^{2}c^{2}, a^{2}c^{2} \in K$ and hence $a^{4}, b^{4}, c^{4} \in K$.
	
	Let $w\in H_1$ and $|w|_{a} = n_{1}, |w|_{b} = n_{2}, |w|_{c} = n_{3}$. Thus, $n_{1} + n_{2} + n_{3} = 0\; (\text{mod}\ 4)$. Observe that  
	\begin{eqnarray*}
		 w &=& a^{n_{1}}b^{n_{2}}c^{n_{3}}h_1,\ \text{for some}\ h_1 \in G_3'\\
		 &=& a^{n_{1}}b^{n_{2}}c^{-n_{1}-n_2+4t}h_1,\ \text{for some}\;  t \in \mathds{Z}\\
		 &=& c^{-n_{1}}a^{n_{1}}c^{-n_2}b^{n_{2}}h_2,\ \text{for some}\ h_2 \in K, \text{as}\; G_3' \le K\; \text{and}\; c^4 \in K\\
		 &=& (c^{-1}a)^{n_{1}}(c^{-1}b)^{n_2}h_3,\ \text{for some}\ h_3 \in K.
	\end{eqnarray*}
    Hence, since $c^{-1}a$ and $b^{-1}c$ are in $K$, we have $w \in K$ so that $H_1 \subseteq K$.
    
    For reverse inequality, first we observe that $a, a^2, a^3 \notin K$. If $a, a^3 \in K$,  then $(a, e, e), (a^3, e, e) \in G_3$, which are not possible by Remark \ref{exp_com_len}. Further, if $a^2 \in K$, then $h = (a^2, e, e) \in \text{St}_{G_3}(\widehat{1})$. However, by Remark \ref{exp_com_len}, we get $|h|_A = 1$, which is a contradiction to Remark \ref{len_even_st}.
    
    Now, let $w \notin H_1$ and $|w|_{a} = m_{1}, |w|_{b} = m_{2}, |w|_{c} = m_{3}$. Thus, $m_1 + m_2 + m_3 = s + 4t$, for some $t \in \mathds{Z}$ and $0 < s \le 3$. If possible, suppose $w \in K$. Then
    \begin{eqnarray*}
    	w &=& a^{m_{1}}b^{m_{2}}c^{m_{3}}g_1,\ \text{for some}\ g_1 \in G_3'\\
    	&=& a^{m_{1}}b^{m_{2} + m_{3}}g_2,\ \text{where }\ g_2 = (c^{-m_{3}}b^{m_{3}})^{-1}g_1 \in K\\
    	&=& a^{m_{1} + m_{2} + m_{3}}g_3, \ \text{where }\ g_3 = (b^{-(m_{2} + m_{3})}a^{m_{2} + m_{3}})^{-1}g_2 \in K\\
    	&=& a^sg_4,\ \text{where}\ g_4 = a^{4t}g_3 \in K.
    \end{eqnarray*}
   Hence, $a^s \in K$, which is a contradiction. Therefore, $K \subseteq H_1$ so that $\text{Rist}_{G_3}(1) = H_1 \times \{e\} \times \{e\}$.
	
	For $k \ge 1$, suppose $\textup{Rist}_{G_3}(\widehat{k}) = \stackrel{3^k}{H_k \times \dots \times H_k}$. Accordingly, $\textup{Rist}_{G_3}(\underset{\text{length } k}{11\ldots 1}) = H_k \times \{e\} \stackrel{3^k-1}{\times \dots \times }\{e\}$.
    We show that $\textup{Rist}_{G_3}(\underset{k+1}{11\ldots 1}) = H_{k+1} \times \{e\} \stackrel{3^{k+1}-1}{\times \dots \times} \{e\}$. 
    
    Let $g\in \textup{Rist}_{G_3}(\underset{\text{length }k+1}{11\ldots 1})$. Clearly, $g \in \textup{Rist}_{G_3}(\underset{\text{length }k}{11\ldots 1})$. Then there exist $g', g'' \in G_3$ such that $\psi_{k+1}(g) = (g', e, \ldots, e)$ and $\psi_{k}(g) = (g'', e, \ldots, e)$ with $\psi_1(g'') = (g', e, e)$. Hence, by inductive hypothesis, we have $g'' \in H_k$ so that $|g''|_{A} \equiv 0\ (\textup{mod}\ 2^{k+1})$. Since $\psi(g'') = (g', e, e)$, by Remark \ref{exp_com_len_d}(\ref{exp_com_twice}), we have $|g'|_{A} = 2|g''|_{A}$ so that $|g'|_{A} \equiv 0\ (\textup{mod}\ 2^{k+2})$. Hence, $g' \in H_{k+1}$ so that $\textup{Rist}_{G_3}(\underset{\text{length } k+1}{11\ldots 1}) \subseteq H_{k+1} \times \{e\} \stackrel{3^{k+1}-1}{\times \dots \times} \{e\}$. 
    
    Conversely, suppose $h \in H_{k+1}$. Clearly, $h \in H_1$ so that there exists $h' \in \textup{Rist}_{G_3}(1)$ such that $\psi(h') = (h, e, e)$. Since $|h|_{A} \equiv 0\ (\textup{mod}\ 2^{k+2})$, again by Remark \ref{exp_com_len_d}(\ref{exp_com_twice}), $|h'|_{A} \equiv 0\ (\textup{mod}\ 2^{k+1})$, i.e., $h' \in H_k$. Thus, by inductive hypothesis, there exists $g \in \textup{Rist}_{G_3}(\underset{\text{length } k}{11\ldots 1})$ such that $\psi_k(g) = (h', e, \ldots, e)$ and hence $\psi_{k+1}(g) = (h, e, \ldots, e)$. Hence,  $\textup{Rist}_{G_3}(\underset{\text{length } k+1}{11\ldots 1}) = H_{k+1} \times \stackrel{3^{k+1}-1}{ \{e\} \times \dots \times \{e\}}$. Consequently, we have $\textup{Rist}_{G_3}(\widehat{k+1}) = \stackrel{3^{k+1}}{H_{k+1} \times \dots \times H_{k+1}}$.
\end{proof}

\begin{theorem}
	The group $G_3$ is a branch group.
\end{theorem}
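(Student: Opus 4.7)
The plan is to combine the structural description of $\textup{Rist}_{G_3}(\widehat{k})$ given in the previous theorem with a finite-index statement for the subgroup $H_k$. By definition, $G_3$ being a branch group amounts to (i) spherical transitivity and (ii) $[G_3:\textup{Rist}_{G_3}(\widehat{k})]<\infty$ for every $k$. Spherical transitivity is already in hand by \tref{g_d_sp_tran}, so the whole task reduces to bounding the index of the rigid level-stabilizer.

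First I would observe that $[G_3:H_k]<\infty$ for every $k\ge 1$. This is immediate from \lref{ai}: every $g\in G_3$ can be written as $a^i h$ with $h\in H_k$ and $0\le i<2^{k+1}$, so the cosets $\{a^iH_k:0\le i<2^{k+1}\}$ exhaust $G_3/H_k$ and hence $[G_3:H_k]\le 2^{k+1}$. Next, since $\psi_k$ embeds $\textup{St}_{G_3}(\widehat{k})$ into $G_3\times\stackrel{3^k}{\cdots}\times G_3$ and the previous theorem identifies $\psi_k(\textup{Rist}_{G_3}(\widehat{k}))$ with $H_k\times\stackrel{3^k}{\cdots}\times H_k$, the index satisfies
\[
[\textup{St}_{G_3}(\widehat{k}):\textup{Rist}_{G_3}(\widehat{k})]\;\le\;[G_3:H_k]^{3^k}\;\le\;2^{(k+1)3^k},
\]
which is finite. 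Combining this with the standard fact that $\textup{St}_{G_3}(\widehat{k})$ has finite index in $G_3$ (it is the kernel of the action of $G_3$ on the finite set $X^k$), I conclude that $[G_3:\textup{Rist}_{G_3}(\widehat{k})]<\infty$, completing the proof.

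I do not expect any serious obstacle here: all the heavy lifting has been done in the preceding theorem (the explicit identification of the rigid stabilizer with a product of $H_k$'s) and in \lref{ai} (the normal form modulo the commutator and $H_k$). The only subtlety worth a line of justification is that the bound $[G_3:H_k]\le 2^{k+1}$ coming from \lref{ai} is actually a bound on the index of a subgroup rather than just the number of $a^i$-representatives; this is fine because $H_k$ is normal in $G_3$ (it is defined by a congruence condition on each generator's exponent sum, hence is the kernel of the homomorphism $G_3\to\mathds{Z}/2^{k+1}\mathds{Z}$ sending each $a_i\mapsto 1$), and because \lref{ai} shows every coset is represented by some $a^i$ with $i$ in a set of size $2^{k+1}$.
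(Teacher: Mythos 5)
Your proposal is correct and follows essentially the same route as the paper: reduce to bounding $[\textup{St}_{G_3}(\widehat{k}):\textup{Rist}_{G_3}(\widehat{k})]$, use the preceding theorem to identify $\psi_k(\textup{Rist}_{G_3}(\widehat{k}))$ with $H_k\times\cdots\times H_k$, and use Lemma \ref{ai} to control the index of $H_k$, arriving at the same bound $(2^{k+1})^{3^k}$. The only (harmless) difference is that you invoke the general inequality $[\psi_k(\textup{St}_{G_3}(\widehat{k})):H_k^{3^k}]\le [G_3^{3^k}:H_k^{3^k}]$ where the paper instead exhibits explicit coset representatives of the form $(a^{i_1},\ldots,a^{i_{3^k}})$.
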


\begin{proof}
	We have to show that the subgroup $\textup{Rist}_{G_3}(\widehat{k})$ is of finite index in $G_3$, for all $k \in \mathds{N}$. Since $\textup{St}_{G_3}(\widehat{k})$ is of finite index in $G_3$, we show that $\textup{Rist}_{G_3}(\widehat{k})$ is of finite index in $\textup{St}_{G_3}(\widehat{k})$, for all $k \in \mathds{N}$. 
	
	For $g \in \textup{St}_{G_3}(\widehat{k})$, let $\psi_{k}(g) = (g_1, \ldots, g_{3^k})$. By Lemma \ref{ai}, for every $g_i$ there exist $h_i \in H_k$ and $j_i \in \mathds{N}$ with $0 \le j_i  < 2^{k+1}$ such that $g_i = a^{j_i}h_i$. Note that $(h_1, \ldots, h_{3^k}) \in \textup{Rist}_{G_3}(\widehat{k})$. Since $(a^{j_1}h_1, \ldots, a^{j_{3^k}}h_{3^k})$ and $(h_1^{-1}, \ldots, h_{3^k}^{-1})$ are in $\textup{St}_{G_3}(\widehat{k})$, we have $(a^{j_1}, \ldots a^{j_{3^k}}) \in \textup{St}_{G_3}(\widehat{k})$. Hence, for every $g \in \textup{St}_{G_3}(\widehat{k})$, $\psi_{k}(g) = (a^{j_1}, \ldots a^{j_{3^k}})(h_1, \ldots, h_{3^k})$ so that the coset representatives for the quotient group $\textup{St}_{G_3}(\widehat{k})/\textup{Rist}_{G_3}(\widehat{k})$ are from \[\{(a^{i_1}, \ldots, a^{i_{3^k}}) \in \textup{St}_{G_3}(\widehat{k}) : i_1, \ldots, i_{3^k} \in \{0, \ldots, 2^{k+1}-1\}\}.\]  
	Hence, for every $k\in \mathds{N}$, the index of $\textup{Rist}_{G_3}(\widehat{k})$ in $\textup{St}_{G_3}(\widehat{k})$ is bounded by the number ${(2^{k+1})}^{3^k}$. 
\end{proof}

\end{document}